\theoremstyle{plain}
\newtheorem{theorem}{\bf Theorem}[section]
\newtheorem{conjecture}[theorem]{Conjecture}
\newtheorem{Cor}[theorem]{Corollary}
\newtheorem{lemma}[theorem]{Lemma}
\newtheorem{proposition}[theorem]{Proposition}
\newtheorem{question}[]{Question}
\newtheorem{example}[theorem]{Example}
\newtheorem{thmnonumber}{\bf Main Theorem}
\theoremstyle{definition}
\newtheorem{rem}[theorem]{Remark}
\DeclareMathAlphabet{\mathpzc}{OT1}{pzc}{m}{it}
\newcommand{\R}{\mathbb{R}}
\newcommand{\Lk}{\mathrm{link} \,}
\newcommand{\St}{\mathrm{star} \,}
\newcommand{\sd}{\mathrm{sd} \,}
\begin{document}

\author{Karim Adiprasito \thanks{Supported by DFG within the 
research training group ``Methods for Discrete Structures'' (GRK1408) and by Romanian
NASR, CNCS \& UEFISCDI, project number PN-II-ID-PCE-2011-3-0533}\\ \small
Inst.\ Mathematics, FU Berlin\\
\small \url{adiprasito@math.fu-berlin.de}
\and 
Bruno Benedetti \thanks{Supported by the G\"{o}ran Gustafsson Foundation and by the Swedish Research Council (Vetenskapsr{\aa}det) via the grant ``Triangulerade M{\aa}ngfalder, Knutteori i diskrete Morseteori''.} \\
\small
Dept.\ Mathematics, KTH Stockholm\\
\small \url{brunoben@kth.se}}

\date{\small April 7, 2012}
\title{Tight complexes in 3-space admit perfect discrete Morse functions}
\maketitle
\bfseries

\mdseries

\begin{abstract}
In 1967, Chillingworth proved that all convex simplicial $3$-balls are collapsible. 
Using the classical notion of tightness, we generalize this to arbitrary manifolds: We show that all tight simplicial $3$-manifolds admit some perfect discrete Morse function. 
We also strengthen Chillingworth's theorem by proving that all convex simplicial $3$-balls are non-evasive. In contrast, we show that many non-evasive $3$-balls are not convex.
\end{abstract}

\section{Introduction}
This paper explores the interplay of four properties: convexity, tightness, collapsibility and perfection. While the first two notions come from geometry, the last two are purely combinatorial. 

\textsc{Convexity} is a universally-known property of subsets of $\R^k$: A subset  of $\R^k$ is convex if it contains the whole segment between any two of its points. Since all simplicial complexes can be geometrically realized in some $\R^k$, one calls a simplicial complex ``convex'' if \emph{some} of its geometric realizations are convex. 
It turns out that all convex complexes are acyclic and contractible: They are in fact triangulations of topological balls.
 
\textsc{Tightness} is a way to extend the idea of convexity to non-acyclic complexes, studied among others by Kuiper~\cite{KuiperA,KuiperB} and K\"{u}hnel \cite{Kuehnel}. A subset $C$ of $\mathbb{R}^k$ is ``tight'' if any halfspace of $\mathbb{R}^k$ cuts out a subspace $C^+$ of $C$ such that the inclusion of $C^+$ into $C$ induces injective maps in homology. (We always consider homology with $\mathbb{Z}_2$-coefficients, but the results in this paper generalize to other fields of coefficients.) By convention, a complex is called ``tight'' if \emph{some} of its geometric realizations are tight. 

Convex balls are tight: The subspace cut out is always convex, and in particular contractible.  In the Thirties, Aumann proved a converse statement: Any acyclic tight complex is a convex ball \cite{Aumann}; see also K\"{u}hnel~\cite[Cor.~3.6]{Kuehnel}. In other words, a complex is convex if and only if it is tight and has trivial (reduced) homology. Tight manifolds need not be acyclic, since for any $g$ the genus-$g$ surface can be embedded tightly in $\R^3$.

\textsc{Collapsibility} is a combinatorial version of contractibility, introduced in 1939 by Whitehead \cite{Whitehead}. A \emph{free face} of a given complex is a face that belongs to only one other face. Not all complexes have free faces. A complex is ``collapsible'' if it can be reduced to a single vertex by repeteadly deleting a free face. All collapsible complexes are contractible, but the converse is false~\cite{BING}. The collapsibility property is of particular interest when applied to triangulations of manifolds. The authors have recently shown that collapsible manifolds are not necessarily balls~\cite{KarimBrunoMG&C}; however, collapsible PL manifolds are indeed balls~\cite[p.~293]{Whitehead}. 

\textsc{Perfection} is a property that extends the collapsibility notion to non-acyclic complexes. A \emph{discrete Morse function} on an arbitrary simplicial complex is a weakly-increasing map $f$, at most $2$-to-$1$, defined on the face poset of the complex and mapping into the poset of integers. The \emph{critical faces} of the complex under~$f$ are the faces at which $f$ is strictly increasing. A complex is called \emph{perfect} if it admits discrete Morse functions that are ``perfect'', namely, functions that for each $i$ have as many critical $i$-faces as the $i$-th Betti number of the complex. (We are considering homology with $Z_2$-coefficients, but there would be no trouble in defining ``$\mathbb{F}$-perfect complexes'', for different fields $\mathbb{F}$.)

Collapsible complexes are perfect: They admit a discrete Morse function with one critical vertex, and no further critical faces. In fact, a complex is collapsible if and only if it is acyclic and perfect~\cite[Theorem~3.3]{FormanADV}. Perfect complexes need not be acyclic, since for any genus $g$, any triangulation of the genus-$g$-surface is perfect.

\medskip
A non-trivial relation between the previous properties was noticed in 1967 by Chillingworth~\cite{CHIL}:

\begin{theorem}[Chillingworth] \label{thm:Chillingworth}
Every convex $3$-complex is collapsible.
\end{theorem}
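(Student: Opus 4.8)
The plan is to produce a discrete Morse function on $C$ with a single critical cell by sweeping with a linear functional, reading off at each vertex a ``descending link'', and using convexity to make every descending link collapsible.

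We may assume $\dim C = 3$; a convex complex of dimension $\le 2$ is a triangulated ball of dimension $\le 2$ and collapses to a point by peeling off boundary faces. Fix a convex geometric realization $|C|\subseteq\R^3$ (so $|C|$ is a polytope, the convex hull of the vertices of $C$) and a linear functional $\ell\colon\R^3\to\R$ that takes distinct values on the vertices; order the vertices $v_1,\dots,v_n$ with $\ell(v_1)<\dots<\ell(v_n)$, and for each $k$ let $C_k$ be the subcomplex of $C$ induced on $\{v_1,\dots,v_k\}$. Then $C_1=\{v_1\}$, $C_n=C$, and for every $k$ one has
\[
C_k \;=\; C_{k-1}\,\cup\,\bigl(v_k\ast L_k\bigr),\qquad C_{k-1}\cap\bigl(v_k\ast L_k\bigr)\;=\;L_k,
\]
where $v_k\ast L_k$ is the closed star of $v_k$ in $C_k$ and $L_k:=\Lk(v_k,C_k)$ is the \emph{descending link}: the subcomplex of $\Lk(v_k,C)$ induced on the neighbors $w$ of $v_k$ with $\ell(w)<\ell(v_k)$. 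A cone $v_k\ast L_k$ collapses onto its base $L_k$ whenever $L_k$ is collapsible, via collapses that take place entirely within $C_k$ (they only remove faces containing $v_k$, hence faces not in $C_{k-1}$). Hence, once we know that every $L_k$ with $k\ge 2$ is collapsible, the chain
\[
C=C_n\searrow C_{n-1}\searrow\dots\searrow C_2\searrow C_1=\{v_1\}
\]
exhibits $C$ as collapsible. (Each such $L_k$ is moreover nonempty: if $L_k$ were empty then every neighbor of $v_k$ would have larger $\ell$-value, so every face of $C$ containing $v_k$ — and such faces cover a neighborhood of $v_k$ in $|C|$ — would lie in $\{\ell\ge\ell(v_k)\}$, making $v_k$ a local minimum of the affine function $\ell|_{|C|}$; impossible for $k\ge 2$ since $|C|$ is convex.)

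Everything thus reduces to: \emph{each descending link $L_k$ is collapsible}, and this is the heart of the matter. One vertex is easy, the top vertex $v_n$: here $L_n=\Lk(v_n,C)$, and since $v_n$ is an extreme point of the polytope $|C|$, a hyperplane $\{\ell=\ell(v_n)-\e\}$ with $\e$ small meets $|C|$ in a convex polygon combinatorially isomorphic to $\Lk(v_n,C)$; a triangulated convex polygon is a collapsible $2$-ball. The trouble is with the other vertices. For an \emph{interior} vertex $v=v_k$ the link $\Lk(v,C)$ is a triangulated $2$-sphere bounding the union of the tetrahedra through $v$, and this union need not be convex, so $\Lk(v,C)$ can be combinatorially complicated; its ``lower part'' $L_k$ is then an induced subcomplex of a triangulated $2$-sphere cut out by a halfspace, and such a subcomplex can in principle be disconnected or non-pure, hence not obviously collapsible. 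Ruling this out is exactly where the global convexity of $|C|$ has to enter, and I expect it to be the main obstacle.

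To overcome it I would trade the linear sweep for a \emph{convex} one. A convex body in $\R^3$ is $\CAT(0)$, so the squared distance from a fixed interior point $p$ is a convex function whose sublevel sets (Euclidean balls intersected with $|C|$) are convex; sweeping those balls outward instead of sweeping a hyperplane, and using that the straight-line contraction towards $p$ keeps $|C|$ invariant, one should be able to recognize the subcomplex discarded as each vertex is passed as a cone over a collapsible piece — the relevant ``lower links'' now being pieces of round spheres controlled by genuine convexity rather than by the possibly wild combinatorics of a vertex star. The remaining work is bookkeeping: one must make sure that the collapses performed are collapses of $C$ itself and not of some subdivision of it — collapsibility is not a subdivision invariant — and that at each step one really removes a cone over a collapsible subcomplex. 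Making this precise is, I expect, the true content of the argument, and it is essentially the circle of ideas — ``lower links'' taken with respect to a convex exhaustion — that the present paper then pushes further to reach tight manifolds.
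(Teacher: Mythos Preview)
Your setup matches the paper's exactly: sweep by a generic linear functional, build the filtration $C_1\subset\cdots\subset C_n$, and reduce everything to showing each descending link $L_k=\Lk(v_k,C_k)$ is collapsible. The gap is precisely where you say it is, and you do not close it. You abandon the linear sweep and propose instead to sweep by squared distance to an interior point, hoping the $\CAT(0)$ geometry will control the lower links; but this is only a sketch, and your own worry --- that one might then be collapsing a subdivision of $C$ rather than $C$ itself --- is a genuine obstruction to making that route work.

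The paper shows no detour is necessary: the linear sweep already succeeds once you find the right inductive invariant. That invariant is not ``$|C_k|$ is convex'' (it isn't, from the first deletion on) but rather ``$C_k$ is $\pi$-tight in its given embedding in $\R^3$''. This is Lemma~\ref{lem:acyclic}(1): deleting the top vertex from a $\pi$-tight complex leaves a $\pi$-tight complex, by a short Mayer--Vietoris/Snake-Lemma argument. Iterating, every $C_k$ is $\pi$-tight and acyclic. Now slice $|C_k|$ by a hyperplane just below $v_k$: the slice sits in a $2$-plane and is combinatorially $L_k$, so $L_k$ is \emph{planar}; and Lemma~\ref{lem:acyclic}(2) forces $L_k$ to be acyclic. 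An acyclic planar complex is collapsible (indeed non-evasive) by Lemma~\ref{lem:planar}, and you are done. The ``disconnected or non-pure'' descending link you feared is ruled out not by any local convexity of the vertex star, but by the global fact that $\pi$-tightness --- unlike convexity --- survives each vertex deletion.
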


\noindent Lickorish's question of whether all convex $4$-complexes are collapsible, is to the present day unsolved. 
In general, finding discrete Morse functions with as few critical cells as possible by hand (or by computer) is a difficult task, with a priori no guarantee of success. This is why results like Theorem~\ref{thm:Chillingworth} are important also from the perspective of computational geometry.

In the present paper, we generalize Theorem~\ref{thm:Chillingworth} to complexes that are not necessarily acyclic, by constructing perfect discrete Morse functions on them.

\begin{thmnonumber}[Theorems~\ref{thm:tomo3ball} \& \ref{thm:tomo3man}] \label{mainthm:chilliTIGHT}
Every tight complex in $\R^3$ is perfect. Every $3$-manifold that has a tight embedding in some $\R^k$ is perfect.
\end{thmnonumber}

Note that for acyclic complexes ``tight'' boils down to ``convex'' and ``perfect'' boils down to ``collapsible'', so that Main Theorem~\ref{mainthm:chilliTIGHT} indeed reduces to Theorem~\ref{thm:Chillingworth}.

In 1984, Kahn, Saks and Sturtevant characterized for simplicial complexes the property of \textsc{evasiveness}, originally introduced by Karp in the context of theoretical computer science. They showed that {non-evasive strictly implies collapsible. Later Welker proved that the barycentric subdivision of every collapsible complex is non-evasive~\cite[Theorem~2.10]{Welker}. 
It turns out that one can improve Theorem~\ref{thm:Chillingworth} by strengthening its conclusion: 

\begin{thmnonumber}[Corollary~\ref{cor:chillingworth}] \label{mainthm:chilliNE}
Every convex $3$-complex is non-evasive. 
\end{thmnonumber}

For example, Rudin's ball, as well as any linear triangulation of the $3$-simplex, is non-evasive. That said, not all collapsible balls are evasive, as recently shown by Benedetti--Lutz~\cite{ BenedettiOWR, BenedettiLutz}. We proved in~\cite[Theorem~3.42]{KarimBrunoMG&C} that every convex $d$-complex becomes non-evasive after at most $d-2$ barycentric subdivisions. 
When $d=3$, Main Theorem~\ref{mainthm:chilliNE} improves this bound by one unit. 

The converse of Main Theorem~\ref{mainthm:chilliNE} does not hold: Some $3$-balls are non-evasive, without even admitting linear embeddings in $\mathbb{R}^3$. To prove this, we have to take a detour in classical knot theory, and extend results by Lickorish and Martin to knots of bridge index three:

\begin{thmnonumber}[Proposition~\ref{prop:LickorishClaim}] \label{mainthm:knots}
Every composite knot of bridge index $3$ appears as $3$-edge subcomplex in a suitable, \emph{perfect} triangulation of $S^3$. 
\end{thmnonumber}
 
In the last part of the paper, we discuss what happens in higher dimensions. Whether all convex $d$-balls are collapsible, remains an open question~\cite[Problem~5.5]{Kirby}. However, we can construct $4$-balls that are tight with respect to {\em some} directions, and which are far from being collapsible ({\bf Theorem~\ref{thm:tomo4ball}}).

\section{Main Results}

In this section, we strengthen Chillingworth's theorem (``all convex $3$-balls are collapsible'') in two directions: First we show that convex $3$-balls are non-evasive, which is stronger than being collapsible ({\bf Corollary~\ref{cor:chillingworth}}). Then we show that all tight complexes in $\R^3$ admit perfect discrete Morse functions ({\bf Theorem~\ref{thm:tomo3ball}}). This statement boils down to Chillingworth's theorem in case the reduced homology of the complex is trivial. 

We denote the $i$-th (non-reduced) Betti number by $\beta_i$. If $\sigma$ is a face of simplicial complex $C$, by $C - \sigma$ we denote the {\it deletion of $\sigma$ from $C$}, which is the subcomplex of $C$ given by all faces of $C$ that do not contain $\sigma$. 

Let $|C|$ be  a geometric realization in $\R^k$ of a simplicial complex $C$. 
Let $\pi$ be a nonzero vector in $\R^k$. 
If $h$ is any affine hyperplane orthogonal to $\pi$, we denote by $h^+$ (resp. $h^-$) the halfspace delimited by $h$ in direction $\pi$ (resp. in direction $-\pi$).
We say that $|C|$ is {\it $\pi$-tight} if for  every hyperplane $h$ orthogonal to $\pi$ the homomorphism 
\[ H_i ( h^+ \cap |C|) \longrightarrow H_i (|C|) \]
induced by the inclusion $h^+ \cap |C| \subset |C|$ is injective. 
We say that $|C|$ is {\em tight} if $|C|$ is $\pi$-tight for almost all vectors $\pi$. ``Almost all'' means that we allow a measure-zero set of exceptions;  we will always assume $\pi \ne 0$.

This definition of tightness can be found for example in K\"{u}hnel~\cite{Kuehnel} or Effenberger~\cite{Effenberger}. We should mention that a slightly different definition was used by Kuiper to study smooth embeddings of manifolds, cf.~\cite{KuiperA}.
The notion of tightness can be viewed as a generalization of the notion of convexity to arbitrary topological types: 

\begin{theorem}[Aumann \cite{Aumann}]\label{thm:Aumann}
Let $|C|$ be  a linear embedding of a simplicial complex $C$ into $\R^k$.  $|C|$ is convex if and only if it $|C|$ is tight and has trivial homology.
\end{theorem}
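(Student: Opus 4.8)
The plan is to treat the two implications separately; the forward direction is routine, and the backward direction is Aumann's theorem, whose proof I would organise as a short reduction followed by an induction on dimension, following Aumann~\cite{Aumann} (with K\"uhnel~\cite{Kuehnel} as a guide). For ``convex $\Rightarrow$ tight and trivial homology'': a convex set is contractible, so its reduced homology vanishes; and for any closed halfspace $H$ the set $H\cap |C|$ is an intersection of two convex sets, hence convex, hence empty or contractible, so the inclusion $H\cap|C|\hookrightarrow|C|$ is injective on $H_i$ for every $i$ (automatically for $i\ge 1$, and in degree $0$ because it carries the class of a point to a point of the path-connected set $|C|$). Thus $|C|$ is $\pi$-tight for every $\pi$.

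For the converse, write $X:=|C|$, a compact polyhedron, and assume $X$ is tight with trivial reduced homology. First I would extract the working form of tightness: fixing a generic $\pi$ for which $X$ is $\pi$-tight, the map $H_i(h^+\cap X)\to H_i(X)$ is injective for every $h\perp\pi$; since $H_i(X)=0$ for $i\ge 1$ and injectivity in degree $0$ means connectedness, each $h^+\cap X$ is empty or acyclic. Because $X$ has finitely many faces, the homology type of $h^+\cap X$ is locally constant in $h$ off a lower-dimensional ``wall'' set, so a standard perturbation argument lets one assume $h$ generic throughout (or upgrades the conclusion to all $h$). Applying Mayer--Vietoris to $X=(h^+\cap X)\cup(h^-\cap X)$, whose overlap is $h\cap X$, and using $\tilde H_*(X)=0$, one then deduces that $h\cap X$ is also empty or acyclic.

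The next step is to bootstrap this to the statement that $Q\cap X$ is empty or acyclic for \emph{every} convex polyhedron $Q\subseteq\R^k$; I would prove it by induction on the number of facets of $Q$, peeling off one defining halfspace at a time and tracking the homology change with Mayer--Vietoris (alternatively via the nerve of the cover of $X$ by the ``outward'' halfspace slices determined by a small ball disjoint from $X$). Granting that, the geometry is easy by induction on $k$: a nonempty compact polyhedron $X$ all of whose convex-polyhedral sections are empty or acyclic is convex. For $k\le 1$, taking $Q=\R^k$ shows $X$ is connected and compact, hence a point or a closed interval. For $k\ge 2$, any hyperplane section $h\cap X$ inherits the same property inside $h\cong\R^{k-1}$ — crucially because a convex polyhedron of $h$ is still a finite intersection of halfspaces of $\R^k$ — so $h\cap X$ is convex by induction; since any two points of $X$ lie in a common hyperplane $h$, the segment joining them lies in $h\cap X\subseteq X$, and therefore $X=\conv X$.

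I expect the main obstacle to be exactly the middle step: promoting ``halfspace sections of $X$ are acyclic'' to ``all convex-polyhedral sections of $X$ are acyclic.'' Tightness only constrains single halfspaces, and an intersection of two halfspaces (or a hyperplane $\cap$ halfspace ``wedge'') is not itself a halfspace, so the Mayer--Vietoris bookkeeping must be done carefully to verify that no homology is created as successive halfspaces are intersected in. This is precisely where acyclicity of $X$ and the full strength of tightness get consumed, and it is the technical core of Aumann's theorem; the dimension induction and the forward direction are comparatively formal.
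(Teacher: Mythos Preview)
The paper does not give a proof of Theorem~\ref{thm:Aumann}; it is quoted from Aumann~\cite{Aumann} (with a pointer to K\"uhnel~\cite[Cor.~3.6]{Kuehnel}) and used as a black box. So there is no in-paper argument to compare against; what follows is an assessment of your proposal on its own merits.

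The forward implication is correct and complete. For the converse, your global plan---show that generic hyperplane sections $h\cap X$ are again tight and acyclic in $h\cong\R^{k-1}$, then induct on $k$---is the right shape. The difficulty is exactly where you place it, and your proposed mechanism does not close it. Concretely, your ``induction on the number of facets plus Mayer--Vietoris'' runs in a circle already for two halfspaces. To show $H_1\cap H_2\cap X$ is acyclic, split the acyclic set $H_1\cap X$ along $H_2$; since $\tilde H_\ast(H_1\cap X)=0$, Mayer--Vietoris gives
\[
\tilde H_i\bigl(H_1\cap h_2\cap X\bigr)\;\cong\;\tilde H_i\bigl(H_1\cap H_2\cap X\bigr)\ \oplus\ \tilde H_i\bigl(H_1\cap H_2^-\cap X\bigr),
\]
so you now need $H_1\cap h_2\cap X$ acyclic. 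Splitting the acyclic set $h_2\cap X$ along $H_1$ reduces that to $h_1\cap h_2\cap X$ being acyclic; splitting $h_1\cap X$ along $H_2$ reduces that back to a half-hyperplane section; and so on. Every step trades one unknown intersection for another of the same type, so the induction on the number of defining halfspaces never reaches a base case. Your nerve alternative has the same defect: the nerve lemma for the cover by ``outward'' halfspaces requires acyclicity of all finite intersections $\bigl(\bigcap_{i\in S}H_i\bigr)\cap X$, which is precisely the multi-halfspace statement you are trying to prove.

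So the proposal correctly isolates the crux but does not supply the extra idea needed to break the circularity. That extra idea is the actual content of Aumann's theorem; one route in the polyhedral setting is a sweep/rotation argument (rotate $H_\theta$ around the flat $h_1\cap h_2$ and use tightness of $X$ in \emph{every} generic direction to control how $H_1\cap H_\theta\cap X$ changes as $h_\theta$ crosses vertices), but this has to be carried out carefully and is not a consequence of a single Mayer--Vietoris computation. As written, the backward implication remains a plan rather than a proof.
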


We study tight complexes in $\R^k$ by reducing them to complexes that embed in $\R^{k-1}$. This is particularly effective when $k=3$, since planar complexes have all sorts of nice properties. The reduction step consists in intersecting any $\pi$-tight embedding of the complex with a hyperplane orthogonal to $\pi$. This is explained in the following Lemma.

\begin{lemma} \label{lem:acyclic}
Let $|C|$ be a $\pi$-tight linear embedding of a simplicial complex $C$ in $\R^k$, such that $\pi$ is in general position with respect to $C$ (i.e. no two vertices have the same value with respect to $\langle \pi , - \rangle$. 
Let $v$ denote the vertex of of $C$ that maximizes the inner product $\langle \pi , - \rangle$. Then
\begin{compactenum}[\rm (1)]
\item the complex $C- v$ has a $\pi$-tight embedding in $\R^k$, and
\item $\beta_i \, (\Lk(v,C)) + \beta_{i+1} ( C-v ) -\delta_{i0}=  \beta_{i+1} (C)\, $,	 where $\delta_{ij}$ is the Kronecker delta.
\end{compactenum}
\end{lemma}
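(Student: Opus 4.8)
The plan is to reduce both parts to one observation about the last ``Morse event'' of a linear sweep. Because $v$ uniquely maximises $\langle\pi,-\rangle$, it is an exposed vertex of $\conv(\text{vertices of }|C|)$; hence the directions $\rho$ along which $v$ \emph{minimises} $\langle\rho,-\rangle$ form a nonempty open cone, and since $|C|$ is tight I may pick such a $\rho$ in general position for which $|C|$ is $\rho$-tight. Recall that $\rho$-tightness is equivalent to: the inclusions of the superlevel sets $|C|\cap\{\langle\rho,-\rangle\ge s\}$ into one another are injective in homology for all thresholds $s$. For $s$ just above $\langle\rho,v\rangle$, the set $|C|\cap\{\langle\rho,-\rangle\ge s\}$ is $|C|$ with a small open cone-neighbourhood of $v$ removed; it contains $|C-v|$ and deformation retracts onto it (the difference is a collar of $|\Lk v|$, which retracts onto $|\Lk v|\subseteq|C-v|$). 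For $s\le\langle\rho,v\rangle$ it is all of $|C|$, since $v$ is the $\rho$-minimum.

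For part (2): applying superlevel injectivity between these two thresholds shows that $|C-v|\hookrightarrow|C|$ is injective in homology. I then feed this into the long exact sequence of the pair $(|C|,|C-v|)$. By excision and the suspension isomorphism, $H_{i+1}(|C|,|C-v|)\cong H_{i+1}(|\St v|,|\Lk v|)\cong\widetilde H_i(|\Lk v|)$, since $|\St v|=v*|\Lk v|$ is a contractible cone. Injectivity of $H_i(|C-v|)\to H_i(|C|)$ forces the connecting map $\widetilde H_i(|\Lk v|)\to H_i(|C-v|)$ to vanish, so the sequence splits into short exact sequences $0\to H_{i+1}(|C-v|)\to H_{i+1}(|C|)\to\widetilde H_i(|\Lk v|)\to 0$. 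Passing to non-reduced Betti numbers — the discrepancy $\dim\widetilde H_0(|\Lk v|)=\beta_0(\Lk v)-1$ accounting for the term $-\delta_{i0}$ — yields exactly $\beta_i(\Lk v)+\beta_{i+1}(C-v)-\delta_{i0}=\beta_{i+1}(C)$.

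For part (1): I apply the orthogonal map $R$ with $R\rho=\pi$ and set $D:=R|C|\setminus(\text{open star of }v)$, the restricted embedding of $C-v$ inside the $\pi$-tight copy $R|C|$, in which $v$ is now the $\pi$-\emph{minimum}. To see that $D$ is $\pi$-tight, fix an upper halfspace $h^+$ at height $t$. If $t\le\langle\pi,v\rangle$ then $h^+\cap D=D$. If $t>\langle\pi,v\rangle$, write $h^+\cap R|C|=(h^+\cap D)\cup(h^+\cap|\St v|)$; the cone piece $h^+\cap|\St v|$ deformation retracts onto the part of $|\Lk v|$ at heights $\ge t$, which is exactly its overlap with $h^+\cap D$, so Mayer--Vietoris shows $h^+\cap D\hookrightarrow h^+\cap R|C|$ is an isomorphism in homology. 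Composing with $\pi$-tightness of $R|C|$ gives $H_*(h^+\cap D)\hookrightarrow H_*(R|C|)$; this factors through $H_*(D)\hookrightarrow H_*(R|C|)$, which is injective because $D$ is homotopy equivalent to the superlevel set of $R|C|$ just above its $\pi$-minimum (which injects by $\pi$-tightness). Hence $H_*(h^+\cap D)\hookrightarrow H_*(D)$, so $D$ is a $\pi$-tight embedding of $C-v$.

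The main obstacle is the geometric bookkeeping rather than anything deep: identifying the superlevel sets near $v$ with $C-v$ up to a collar, handling the degenerate thresholds where $h$ meets a vertex, and making the Mayer--Vietoris retraction in part (1) precise enough to conclude that the comparison map is the inclusion-induced one and is bijective in each degree. It is also worth noting that one genuinely uses that $|C|$ is \emph{tight}, not merely $\pi$-tight — to produce the auxiliary direction $\rho$ and to control $|C|$ below the hyperplanes orthogonal to $\pi$; for an embedding that is only $\pi$-tight (e.g.\ a path realised as a single peak) statement (2) already fails.
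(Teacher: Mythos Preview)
Your argument is correct, and so is your closing remark: with the one-sided reading of $\pi$-tightness (only $h^+$ injects), part~(2) already fails for the $\Lambda$-path. The paper's own proof tacitly uses the two-sided version --- its Mayer--Vietoris step needs the \emph{lower} halfspace piece, homotopy equivalent to $|C-v|$, to inject into $|C|$, which is $(-\pi)$-tightness rather than $\pi$-tightness. So both proofs need more than the literal hypothesis; note, though, that you do not need full tightness either: taking $\rho=-\pi$ already makes $v$ the minimum, so two-sided $\pi$-tightness is enough for your argument and no generic-direction step is required.

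Beyond that your route genuinely differs from the paper's. For~(2), the paper reads the identity straight off the Mayer--Vietoris sequence for $|C|$ split by a hyperplane $h$ just below $v$ (the piece containing $v$ is a cone, the other is $|C-v|$ up to homotopy, the overlap is $|\Lk v|$); your pair/excision argument is an equivalent repackaging. For~(1), the paper keeps the \emph{original} embedding of $C-v$ and compares the Mayer--Vietoris short exact sequences for $|C|$ and for $z^+\cap|C|$ via a Snake-Lemma square in which the $|\Lk v|$ column is common to both rows. Your rotation is a different device: placing $v$ at the $\pi$-minimum forces $v\notin h^+$, and that is exactly what makes the radial retraction of $h^+\cap|\St v|$ onto $h^+\cap|\Lk v|$ legitimate, giving the isomorphism $H_*(h^+\cap D)\cong H_*(h^+\cap R|C|)$ directly. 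The paper's version has the advantage that the whole induction in the main theorem runs inside one fixed embedding (vertices are removed in the order dictated by $\langle\pi,-\rangle$); yours re-embeds at each step but makes the homotopy identifications more transparent.
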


\begin{proof} With respect to the hyperplane $h$, we denote by $h^-$ the open halfspace containing the vertex $v$, and by $h^+$ the open halfspace containing all the other vertices. Note that $h^+ \cap |C|$ is combinatorially equivalent to $C-v$, and that $h \cap |C|$ is combinatorially equivalent to $\Lk(v,C)$: See Figure~\ref{fig:tight}.

\begin{figure}[htbf]
	\centering
\includegraphics[width=.23\linewidth]{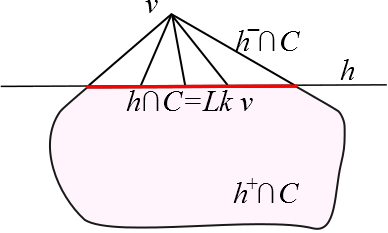}
\vskip-2mm
    \caption{\footnotesize If the hyperplane $h$ is close enough to the vertex $v$, the halfspace $h^+$ delimited by $h$ and not containing~$v$ intersects the complex in a space that is  homotopy equivalent to $C- v$, the deletion of $v$ from $C$. The intersection of $h$ with $C$ is combinatorially equivalent to $\Lk (v,C)$.}
    \label{fig:tight}
\end{figure}

Consider the following long exact Mayer-Vietoris sequence:
\[ 
\!\!\!\!\!\! \ldots \; \rightarrow 
\, \tilde{H}_{i+1}(C) \, \rightarrow \, 
\tilde{H}_i( h \cap |C|) \, \rightarrow \, 
\tilde{H}_i(h^+\cap |C|)\oplus \tilde{H}_i(h^-\cap |C|) \, \rightarrow \, 
\tilde{H}_i(C) \, \rightarrow \; \ldots \; \rightarrow 0. 
\] 
where $\tilde{H}_\ast$ denotes reduced homology. $\tilde{H}_i(h^-\cap |C|)$ is combinatorially equivalent to $\St(v,C)$, which is contractible. By the definition of tightness, the map $\tilde{H}_i(h^+\cap |C|)\oplus \tilde{H}_i(h^-\cap |C|) \rightarrow \tilde{H}_i ( C )$ is injective, whence we obtain the short exact sequence
\[ 
0 \;\; \rightarrow \;\;
\tilde{H}_{i+1}(h^+\cap |C|)\oplus \tilde{H}_{i+1}(h^-\cap |C|) 
\;\; \rightarrow \;\; 
\tilde{H}_{i+1} (|C|)
\;\; \rightarrow \;\; 
\tilde{H}_i(h\cap |C|) 
\;\; \rightarrow \; 
0. 
\] 
This implies item (2) immediately, since $h \cap |C| = \Lk(v,C)$ and $h^+ \cap |C|$ is isomorphic to $C-_s v$. To prove (1), we choose for $h^+ \cap |C|$ its natural embedding inside $|C|$. Let $z$ be an arbitrary hyperplane orthogonal to $\pi$, and consider the open halfspace $z^+$ delimited by $z$ in the direction of $\pi$. We want to prove that the inclusion $z^+ \cap ( h^+ \cap |C|) \hookrightarrow ( h^+ \cap |C| )$ induces injective maps in homology. By the $\pi$-tightness of $C$, we already know that $z^+ \cap |C| \hookrightarrow |C|$ induces injective maps in homology. Moreover, reasoning as above, we obtain two exact sequences connecting the homology of $h \cap |C|$ to the homologies of the intersection of halfspaces with $|C|$ and with $h^+ \cap |C|$, respectively. Hence we have the following commutative diagram:

\[
 \begin{CD}
0 
@>>>  
\; H_i ( h^+ \cap C) \;
@>>>
\; H_i (C) \;
@>>>
\; H_{i-1} (h \cap |C|) \;
@>>>
0 \\
@.
@A{j}AA
@A{i}AA
@|
@.
\\
0 
@>>>  
\; H_i ( h^+ \cap z^+ \cap C) \;
@>>>
\; H_i (z^+ \cap C) \;
@>>>
\; H_{i-1} ( h \cap |C| ) \;
@>>>
0 \\
 \end{CD}
\]
By the Snake Lemma, since the map $i$ is injective, so is $j$.
\end{proof}

Recall that a $0$-dimensional complex is \emph{non-evasive} if and only if is a point. Recursively, a $d$-dimensional simplicial complex ($d>0$) is called \emph{non-evasive} if and only if there is some vertex $M$ whose link and deletion are both non-evasive. A \emph{planar} complex is a complex that embeds in $\mathbb{R}^2$, or equivalently, that is combinatorially equivalent to the subcomplex of some triangulated disc. According to this definition, any triangulation of $S^2$ is \emph{not} planar.

\begin{lemma} \label{lem:planar}
Every planar complex admits a perfect discrete Morse function, and every planar complex with trivial first homology is a disjoint union of non-evasive complexes.
\end{lemma}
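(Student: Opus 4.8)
The plan is to prove both statements simultaneously by induction on the number of vertices, using the standard "shelling from a boundary vertex" idea adapted to planar complexes. A planar complex $C$ is, by definition, a subcomplex of a triangulated disc $D$ embedded in $\R^2$. First I would reduce to the case where $C$ is connected and $2$-dimensional: a perfect discrete Morse function is built by handling each connected component separately (taking a disjoint union of perfect functions and shifting values), and if $\dim C < 2$ the statements are elementary (trees and graphs are collapsible onto a wedge of circles, realizing their Betti numbers). So assume $C$ is connected, planar, and pure-or-not of dimension $\le 2$.

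Next I would pick a vertex $v$ of $C$ that lies on the \emph{outer boundary} of a planar embedding — more precisely, a vertex $v$ such that $v$ is on the boundary of the ambient disc $D$, which forces $\Lk(v,C)$ to be a subcomplex of a path (an arc), hence a disjoint union of points and closed intervals. Crucially, the deletion $C-v$ is again planar. For the non-evasiveness claim (when $\beta_1(C)=0$, i.e.\ $C$ is simply connected, hence contractible since it is planar and $2$-dimensional): since $\Lk(v,C)$ is a disjoint union of arcs and points it is non-evasive (every forest is), and I would argue that $C-v$ is planar with $\beta_1(C-v)=0$ as well — this uses that removing a boundary vertex of a disc cannot create a cycle. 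Then by induction $C-v$ is non-evasive, and therefore so is $C$.

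For the perfect-discrete-Morse-function claim in general, I would again delete a boundary vertex $v$, but now track Betti numbers via a Mayer–Vietoris / long exact sequence argument exactly parallel to Lemma \ref{lem:acyclic}: the star of $v$ is contractible, so one gets a relation $\beta_i(\Lk(v,C)) + \beta_{i+1}(C-v) - \delta_{i0} = \beta_{i+1}(C)$ provided the inclusion-induced maps behave well — and here the point is that since $\Lk(v,C)$ is $1$-dimensional, all the relevant splitting happens in degrees $0$ and $1$, so no tightness hypothesis is needed; the LES of the pair $(C, C-v)$ together with excision ($H_*(C, C-v) \cong H_*(\St v, \Lk v)$) gives the bookkeeping directly. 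By induction $C-v$ has a perfect discrete Morse function $f'$; I then extend $f'$ over the faces of $\St v$, making the faces of $\Lk v$ into a discrete Morse function on an arc-union (perfect, with one critical vertex) and coning appropriately, so that the new critical faces are exactly those forced by the Betti-number relation. Assembling the count shows the resulting function on $C$ is perfect.

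The main obstacle I anticipate is the bookkeeping in the last step: one must verify that the "local" discrete Morse function on $\St v$ (relative to $\Lk v$) can be glued to $f'$ on $C-v$ without introducing spurious critical cells, and that the arithmetic of critical faces matches $\beta_i(C)$ for every $i$ simultaneously — this is where one has to be careful about whether a critical $i$-cell of $\Lk v$ contributes a critical $(i{+}1)$-cell of $C$ or cancels against something in $C-v$. A clean way around this is to invoke the known fact that a discrete Morse function with $c_i$ critical $i$-cells exists iff the Morse inequalities are met \emph{and} the "excess" cancellations can be realized, which in dimension $\le 2$ for planar complexes is automatic because $1$-dimensional links are non-evasive; so the gluing lemma for discrete Morse functions (critical cells of a union are at most the union of critical cells) finishes the count. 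Alternatively, one can cite that every $2$-complex embeddable in $\R^2$ collapses onto a wedge of $\beta_1$ circles after removing a spanning tree's worth of triangles, which immediately yields a perfect discrete Morse function.
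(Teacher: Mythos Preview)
Your vertex-deletion strategy is natural in that it mirrors the proof of Theorem~\ref{thm:tomo3ball}, but the paper takes a simpler and cleaner route. For the perfect-Morse-function half, the paper finds a \emph{free edge}: in any linear planar embedding of a $2$-complex, an edge maximizing a generic linear functional lies in exactly one triangle, so one collapses that pair and inducts on the number of triangles until only a graph remains. No Mayer--Vietoris bookkeeping is needed at all. For the non-evasiveness half (when $H_1=0$), the paper first deduces collapsibility from perfection, and then argues via a block decomposition of $C$ into pure $2$-pieces and trees glued along single points.

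Your proposal has a genuine gap in the non-evasiveness argument. You claim that $\Lk(v,C)$ is non-evasive because ``every forest is,'' but this is false: by definition a $0$-complex is non-evasive only when it is a single point, so any disconnected forest is evasive. Picking $v$ on the boundary of the ambient disc guarantees only that $\Lk(v,C)$ is a \emph{subcomplex} of an arc, which may well be disconnected (take $v$ to be the wedge point of two triangles). You would need to show that some boundary vertex has a \emph{connected} link in $C$; this is true when $H_1(C)=0$, but establishing it is essentially the block-tree argument the paper gives. The same disconnection issue undermines your perfect-DMF half: if $C-v$ has $k+1$ components, the glued function you describe has $k+1$ critical vertices and $k$ excess critical edges relative to $\beta_1(C)$, hence is not perfect. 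Your proposed remedies do not close this: the ``Morse inequalities plus realizable cancellations'' criterion is not a theorem one can cite off the shelf, and the claim that every planar $2$-complex collapses to a wedge of $\beta_1$ circles is precisely what the lemma asserts.
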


\begin{proof}
Without loss of generality, we can assume that the complex is connected. Every graph admits a perfect discrete Morse function, and every tree is non-evasive; so if $\dim D < 2$ the claim is clear. Suppose that $D$ is a $2$-dimensional connected complex, and embed it linearly in the plane. Since $D$ is planar, every edge of $D$ is contained in at most two triangles, and by maximizing some linear functional over $|D|$ we can find at least one edge $e$ that is contained in exactly one triangle. So $D$ collapses onto the complex $D' = D - T - e$, which is also planar. By induction on the number of facets, the complex $D'$ admits a perfect discrete Morse function. So also $D$ does. 

Now, assume $H_1 (D) = 0$. Since $D$ is planar and connected, $H_0(D) = \mathbb{Z}$ and $H_2(D)=0$. Since $D$ admits a perfect discrete Morse function, $D$ is collapsible. Write $D$ as the union of some pure $2$-complexes $D_i$ with some trees $T_i$, so that the intersection of two different $D_i$'s is either a single point, or empty. 
Any of the trees $T_i$ that does not connect two or more $D_i$ is clearly nonevasive, and can be removed without loss of generality. Similarly, every $D_i$ that is connected to the rest of the complex via a unique vertex is nonevasive, and can  be removed. Since $D$ contains no cycles, and is homotopic to a tree, there is always either a tree $T_i$ that is connected to only one $D_i$, or a $2$-complex $D_i$ that is connected to the rest via a single vertex.
\end{proof}

\begin{rem}\rm
Lemma~\ref{lem:planar} says that while planar complexes can be successfully simplified via elementary collapses, \emph{acyclic} planar complexes can be simplified even faster, by deleting vertices whose link is a path. Can all planar complexes be simplified faster? The answer is negative. In fact, consider the planar $1$-complex~$E$ of Figure~\ref{fig:wedge}, consisting of the four triangles $[1,2,3]$, $[3,4,5]$, $[1,5,6]$ and $[2,4,6]$. Topologically, $E$ retracts to a bouquet of three $1$-spheres. If we want to perform an elementary collapse on $E$, all edges of $E$ are ``free'' (that is, they all belong to one triangle only.) However, if we want to perform a non-evasiveness step on $E$, all vertices of $E$ are ``blocked'': Every vertex link consists of two disjoint edges. 
\end{rem}

\begin{figure}[htbf]
	\centering
\includegraphics[width=.14\linewidth]{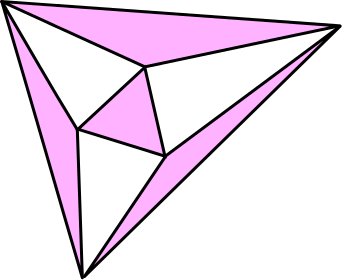}
    \caption{\footnotesize A planar complex $E$ whose vertex links are all disconnected (hence evasive).}
    \label{fig:wedge}
\end{figure}

Reasoning as in the proof of Lemma~\ref{lem:planar}, one can reach the following Lemma, whose proof is left to the reader:

\begin{lemma}\label{lem:planar2}
If $D$ is a subcomplex of a planar complex $C$ and $|D|$ is a deformation retract of $|C|$, then $C$ collapses to $D$.
\end{lemma}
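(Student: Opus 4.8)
The plan is to induct on the number of faces of $C$ not lying in $D$, with the case $C=D$ vacuous (we may assume $|C|$, hence $|D|$, nonempty). Everything hinges on one claim: \emph{if $C\neq D$, then $C$ has a free face $\sigma$ that does not lie in $D$}. Granting this, let $\tau$ be the unique face of $C$ properly containing $\sigma$; then $\tau\notin D$ as well, since $D$ is a subcomplex and $\sigma\subset\tau$, so $C$ elementarily collapses onto $C':=C-\sigma-\tau$, which still contains $D$ and has exactly two fewer faces outside $D$. I would then check that $|D|$ is again a deformation retract of $|C'|$: the inclusion $|C'|\hookrightarrow|C|$ is a homotopy equivalence because $|C|\searrow|C'|$, the inclusion $|D|\hookrightarrow|C|$ is one by hypothesis, so by the two-out-of-three property $|D|\hookrightarrow|C'|$ is a homotopy equivalence; being moreover a cofibration (the inclusion of a subcomplex), it exhibits $|D|$ as a deformation retract of $|C'|$. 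The inductive hypothesis then gives $C'\searrow D$, whence $C\searrow C'\searrow D$.

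To produce the free face $\sigma$, I would pass to the subcomplex $\widetilde C\subseteq C$ generated by \emph{all} faces of $C$ that are not in $D$. Then $\widetilde C\neq\emptyset$ (as $C\neq D$), $C=D\cup\widetilde C$, and every facet of $\widetilde C$ is a facet of $C$ lying outside $D$ — a generator of $\widetilde C$ cannot be a proper face of another generator — so in particular $\dim(\widetilde C\cap D)<\dim\widetilde C$. Two reductions: first, a free face of $\widetilde C$ that is not in $D$ is automatically free in $C$, since any face of $C$ properly containing a face outside $D$ is itself outside $D$, hence in $\widetilde C$; so it is enough to find a free face of $\widetilde C$ outside $D$. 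Second, collapsing $|D|$ inside $|C|$ and collapsing $|\widetilde C|\cap|D|$ inside $|\widetilde C|$ produce the same quotient space, so by excision $H_\ast(\widetilde C,\widetilde C\cap D)\cong H_\ast(C,D)$, and the latter vanishes because $|D|\hookrightarrow|C|$ is a homotopy equivalence; hence $H_\ast(\widetilde C,\widetilde C\cap D)=0$.

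Now I would split on $\dim\widetilde C\in\{0,1,2\}$ (at most $2$, as $C$ is planar). The case $\dim\widetilde C=0$ cannot occur: then $\widetilde C$ is a nonempty vertex set disjoint from $D$, so $H_0(\widetilde C,\widetilde C\cap D)=H_0(\widetilde C)\neq0$. If $\dim\widetilde C=1$, then every edge of $\widetilde C$ lies outside $D$, so $\widetilde C\cap D$ is a vertex set; the long exact sequence of the pair forces $H_1(\widetilde C)=0$ — so $\widetilde C$ is a forest, with no isolated vertex (such a vertex would be an isolated vertex of $C$ outside $|D|$, contradicting the retraction) — and further forces each component of $\widetilde C$ to contain exactly one vertex of $D$; as each component is then a tree with at least one edge, hence with at least two leaves, it has a leaf outside $D$, which is the sought free face. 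If $\dim\widetilde C=2$, let $P$ be the pure $2$-dimensional subcomplex of $\widetilde C$ generated by its triangles; realizing $C$ (hence $P$) linearly in $\R^2$ and maximizing a generic linear functional gives, exactly as in the proof of Lemma~\ref{lem:planar}, an edge $e$ contained in exactly one triangle of $P$, equivalently in exactly one triangle of $\widetilde C$, equivalently in exactly one triangle of $C$; such an $e$ is a free face of $\widetilde C$. If some such $e$ lies outside $D$, we are done; otherwise every edge of $P$ lying in an odd number of triangles of $P$ lies in $D$, so $z:=\sum_{T\in P}T$ has boundary supported on faces of $\widetilde C\cap D$, i.e.\ $z$ is a relative $2$-cycle of $(\widetilde C,\widetilde C\cap D)$; since $\widetilde C$ has no $3$-dimensional face and $\widetilde C\cap D$ no $2$-dimensional face, no nonzero $2$-chain of $\widetilde C$ is a relative boundary, so $z$ represents a nonzero class in $H_2(\widetilde C,\widetilde C\cap D)$, contradicting $H_\ast(\widetilde C,\widetilde C\cap D)=0$. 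In every case a free face of $C$ outside $D$ is found.

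The main obstacle is precisely this existence statement. Planarity is what supplies a ``topmost'' free edge in the $2$-dimensional part, as in Lemma~\ref{lem:planar}, while the deformation-retraction hypothesis — funneled through the vanishing of $H_\ast(\widetilde C,\widetilde C\cap D)$ — is exactly what excludes the degenerate configuration in which every topmost free edge of $\widetilde C$ is glued from below onto a triangle of $D$; the extreme instance of that configuration is $C$ a triangulated disc and $D$ its boundary circle, where indeed $|D|$ is \emph{not} a deformation retract of $|C|$. The remaining ingredients — the collapse bookkeeping and the stability of the retraction under a single collapse — are routine.
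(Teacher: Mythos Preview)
Your proof is correct. The paper does not actually prove this lemma --- it merely says one can reach it ``reasoning as in the proof of Lemma~\ref{lem:planar}'' and leaves the details to the reader --- and your argument follows precisely that inductive scheme (find a free face outside $D$, collapse, recurse). Where you go beyond the paper's hint is in the careful justification of the key step, namely the existence of a free face of $C$ outside $D$: passing to the auxiliary complex $\widetilde C$ and using the vanishing of $H_\ast(\widetilde C,\widetilde C\cap D)$ to rule out the bad configuration (all free edges of the $2$-dimensional part lying in $D$) via the relative $2$-cycle $z=\sum_T T$ is exactly the sort of argument the lemma needs, and is more than the paper supplies. One small remark: in the $2$-dimensional case the appeal to the linear functional is not strictly necessary, since planarity already gives $H_2(P)=0$, hence $\partial z\neq 0$, which by itself guarantees an edge in exactly one triangle; but invoking it is harmless and keeps the parallel with Lemma~\ref{lem:planar}.
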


\begin{theorem}\label{thm:tomo3ball}
Let $C$ be a simplicial complex that admits a $\pi$-tight linear embedding in $\mathbb{R}^3$, where $\pi$ is in general position with respect to $C$, as in Lemma \ref{lem:acyclic}. Then $C$ admits a perfect discrete Morse function. If in addition $C$ has trivial homology, then $C$ is also non-evasive.
\end{theorem}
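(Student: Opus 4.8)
The plan is to induct on the number of vertices of $C$, peeling off the vertex $v$ that maximises $\langle \pi , - \rangle$ as in Lemma~\ref{lem:acyclic}, and to assemble a perfect discrete Morse function on $C$ out of one on $C-v$ and one on $\Lk(v,C)$. I would work in the equivalent language of acyclic matchings on the face poset (Forman): a complex is perfect precisely when its face poset carries an acyclic matching whose unmatched $i$-faces number $\beta_i$. We may assume $C$ connected, since a $\pi$-tight embedding restricts to a $\pi$-tight embedding of each component and a perfect discrete Morse function on $C$ can be obtained by juxtaposing perfect ones on the components; a single vertex being trivially perfect, assume $C$ has at least two vertices, so that $v$ is not isolated. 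By Lemma~\ref{lem:acyclic}(1), $C-v$ has a $\pi$-tight embedding in $\R^3$ --- and, realising $C-v$ as the geometric subcomplex $|C-v|\subseteq|C|$ spanned by the vertices other than $v$, we keep $\pi$ in general position --- so by induction $C-v$ carries an acyclic matching $M_1$ with exactly $\beta_i(C-v)$ unmatched $i$-faces. Moreover $\Lk(v,C)$, being the intersection of $|C|$ with an affine plane, is planar, so by Lemma~\ref{lem:planar} it carries an acyclic matching $M_2$ with exactly $\beta_i(\Lk(v,C))$ unmatched $i$-faces; as $\beta_0(\Lk(v,C))\ge 1$, fix an unmatched vertex $w$ of $M_2$.

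Next I would glue $M_1$ and $M_2$ into a matching $M$ on $C$. The faces of $C$ split into those lying in $C-v$ and those containing $v$, the latter being exactly the faces $\{v\}\cup\tau$ with $\tau\in\Lk(v,C)\cup\{\emptyset\}$. On the faces containing $v$ I put the ``cone'' $M_2'$ of $M_2$: match $\{v\}\cup\sigma$ with $\{v\}\cup\tau$ whenever $(\sigma,\tau)\in M_2$, and in addition match $\{v\}$ with the edge $\{v,w\}$. Since the two blocks of faces are disjoint, $M:=M_1\sqcup M_2'$ is a partial matching on $C$, whose unmatched faces are: the unmatched faces of $M_1$, and, for every unmatched face $\tau\neq w$ of $M_2$, the face $\{v\}\cup\tau$. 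Thus $C$ has $\beta_0(C-v)=\beta_0(C)$ unmatched vertices, and for every $i\ge 0$ it has $\beta_{i+1}(C-v)+\beta_i(\Lk(v,C))-\delta_{i0}$ unmatched $(i+1)$-faces, which equals $\beta_{i+1}(C)$ by Lemma~\ref{lem:acyclic}(2). So, once $M$ is known to be acyclic, $C$ is perfect.

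\textbf{The main obstacle} I anticipate is precisely the acyclicity of $M$. The key observation is that a $V$-path (gradient path) of $M$ cannot pass from the $v$-free block back into the $v$-containing block: every face below a face avoiding $v$ avoids $v$, and the $M_1$-partner of such a face again avoids $v$. Consequently a closed $V$-path of $M$ lies entirely in $C-v$ or entirely among the faces containing $v$. The first possibility is excluded because $M_1$ is acyclic. In the second, deleting $v$ from every face turns the path into a closed $V$-path of the matching $M_2\cup\{(\emptyset,\{w\})\}$ on $\Lk(v,C)\cup\{\emptyset\}$; but $M_2$ is acyclic, and adjoining the pair $(\emptyset,\{w\})$ produces only dead ends --- the empty face has a single coface, and $w$, being matched downward, has no matched coface --- so no closed $V$-path survives. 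Hence $M$ is acyclic, and the first assertion follows by induction, the base case being a single vertex.

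For the final assertion, assume in addition that $C$ has trivial homology. The short exact sequences $0\to\tilde{H}_{i+1}(C-v)\to\tilde{H}_{i+1}(C)\to\tilde{H}_i(\Lk(v,C))\to 0$ established in the proof of Lemma~\ref{lem:acyclic} then give $\tilde{H}_0(\Lk(v,C))=\tilde{H}_1(\Lk(v,C))=0$, so that $\Lk(v,C)$ is a connected planar complex with trivial first homology and hence non-evasive by Lemma~\ref{lem:planar}; and they give $\tilde{H}_\ast(C-v)\hookrightarrow\tilde{H}_\ast(C)=0$, so that $C-v$ is a $\pi$-tight complex in $\R^3$ with trivial homology and hence non-evasive by the inductive hypothesis. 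Since both $\Lk(v,C)$ and $C-v$ are non-evasive, so is $C$; the base case here is again a single point. The delicate points to watch in writing this out are the compatibility of Lemma~\ref{lem:acyclic}(1) with keeping $\pi$ generic, and the bookkeeping of $V$-paths across the two blocks in the acyclicity argument.
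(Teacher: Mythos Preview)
Your proposal is correct and follows essentially the same approach as the paper: induct on the number of vertices, peel off the $\pi$-maximal vertex $v$, use Lemma~\ref{lem:acyclic}(1) and the inductive hypothesis on $C-v$, use planarity and Lemma~\ref{lem:planar} on $\Lk(v,C)$, cone the link matching over $v$, and combine. Your argument is in fact more detailed than the paper's in one respect: the paper simply asserts that the union of the two matchings is a Morse matching on $C$, whereas you explicitly verify acyclicity by observing that $V$-paths cannot re-enter the $v$-block once they leave it.
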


\begin{proof}
We proceed by induction on the number of vertices. Let $v$ be the highest vertex in the direction of~$\pi$. By induction, $C- v$ admits a perfect Morse matching $f$. The complex $\Lk(v,C)$ is planar, so by Lemma \ref{lem:planar} it admits a perfect Morse matching $f'$. This lifts to a Morse matching on $\St(v,C)$ 
as follows: if $(\sigma,\Sigma)$ is a matching pair in $f'$, we match $v\ast\sigma$ with $v\ast\Sigma$, and if $w$ is some critical vertex of $f'$, we match $v$ with $v \ast w$. Let $f''$ be the resulting Morse matching on $\St(v,C)$. The union of the matchings induced by $f$ and $f''$ yields a Morse Matching on $C$. Let $g$ be any discrete Morse function that induces the latter matching. By construction, for all $i$ we have $c_{i+1} (g)=c_{i+1} (f) + c_{i+1} (f'')=c_{i+1} (f) + c_{i} (f')-\delta_{i0}$. Now, $c_i (f') = \beta_i (\Lk(v,C))$ and $c_{i+1} (f)= \beta_{i+1} (C - v)$. By Lemma~\ref{lem:acyclic}(ii),  $c_{i+1} (g)= \beta_{i+1} (C)$. Hence the function $g$ is perfect.

If $C$ has trivial homology, by Lemma~\ref{lem:acyclic}(ii) $\Lk(v,C)$ has trivial reduced homology groups. So we can apply Lemma \ref{lem:planar} and conclude that $\Lk(v,C)$ is non-evasive. Thus, removing $v$ is a valid nonevasiveness step. Since, $C- v$ is $\pi$-tight by Lemma \ref{lem:acyclic}(i), and has less vertices than $C$, the complex  $C- v$ is non-evasive by induction. This completes the proof.
\end{proof}

With the same argument, one can reach the following statement:

\begin{theorem}\label{thm:tomo3man}
Let $C$ be a triangulation of a $3$-manifold with a $\pi$-tight embedding in some $\R^k$, such that $\pi$ is in general position w.r.t. $C$. Then $C$ admits a perfect discrete Morse function.
\end{theorem}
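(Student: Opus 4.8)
The plan is to mimic the argument of Theorem~\ref{thm:tomo3ball} almost verbatim, replacing the use of planarity of vertex links by the fact that vertex links in a triangulated $3$-manifold are triangulated $2$-spheres (at interior vertices) or triangulated $2$-discs (at boundary vertices), and checking that Lemma~\ref{lem:acyclic} is still available. First I would set up the induction on the number of vertices, picking $v$ to be the highest vertex in the direction $\pi$. Lemma~\ref{lem:acyclic} applies without change: $C$ is $\pi$-tight and $\pi$ is in general position, so $C-v$ is again $\pi$-tight (item (1)) and has fewer vertices, and item (2) still relates the Betti numbers of $\Lk(v,C)$, $C-v$, and $C$. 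By the inductive hypothesis, $C-v$ admits a perfect discrete Morse function $f$ — here one must note that $C-v$ need no longer be a closed manifold, so the induction should really be run over the slightly larger class of subcomplexes of triangulated $3$-manifolds that carry a $\pi$-tight embedding, exactly as the proof of Theorem~\ref{thm:tomo3ball} implicitly runs over subcomplexes of $\R^3$, not just balls.

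The key new ingredient is that $\Lk(v,C)$, being a triangulated $2$-sphere or $2$-disc, admits a perfect discrete Morse function $f'$: a $2$-disc is collapsible, hence perfect with a single critical vertex; a $2$-sphere is perfect with exactly one critical vertex and one critical $2$-face (peel off triangles until a single $2$-simplex remains, matching each removed triangle with a free edge, then collapse to a point except for the last triangle). I would then lift $f'$ to a Morse matching $f''$ on $\St(v,C)=v\ast\Lk(v,C)$ by the same coning recipe as before: match $v\ast\sigma$ with $v\ast\Sigma$ for each matched pair $(\sigma,\Sigma)$ of $f'$, and match $v$ with $v\ast w$ for the critical vertex $w$ of $f'$. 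The union of the matching of $f$ on $C-v$ with $f''$ on $\St(v,C)$ is a Morse matching on all of $C$, since these two subcomplexes meet exactly in $\Lk(v,C)$, whose faces are all matched ``downward'' into $\St(v,C)$ — i.e.\ no cell of $\Lk(v,C)$ is matched inside $C-v$, so acyclicity of the combined matching is immediate.

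Finally I would count critical cells. As in Theorem~\ref{thm:tomo3ball}, for any discrete Morse function $g$ inducing the combined matching we get $c_{i+1}(g)=c_{i+1}(f)+c_{i+1}(f'')=c_{i+1}(f)+c_i(f')-\delta_{i0}$, and since $f$ and $f'$ are perfect this equals $\beta_{i+1}(C-v)+\beta_i(\Lk(v,C))-\delta_{i0}$, which by Lemma~\ref{lem:acyclic}(2) is exactly $\beta_{i+1}(C)$; the remaining critical cell count ($c_0(g)=\beta_0(C)=1$, using connectedness, which one may assume WLOG) comes from the single critical vertex of $f$. Hence $g$ is perfect. The main obstacle I anticipate is purely bookkeeping rather than conceptual: making sure the induction is stated for the right class of complexes (so that the deleted complex $C-v$, which is no longer a manifold, still falls under the hypothesis), and verifying that the two Morse matchings glue to an acyclic matching on $C$ — both points are handled exactly as in the proof of the previous theorem, which is why the authors can legitimately say ``with the same argument''.
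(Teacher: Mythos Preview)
Your approach is exactly the one the paper intends: induct on vertices, peel off the top vertex $v$, apply Lemma~\ref{lem:acyclic}, find a perfect matching on $\Lk(v,C)$, cone it up, and glue. One point needs tightening, though. You correctly observe that the induction must run over the larger class of $\pi$-tight subcomplexes of triangulated $3$-manifolds, since $C-v$ is no longer a manifold. But then your ``key new ingredient'' --- that $\Lk(v,C)$ is a $2$-sphere or $2$-disc --- only holds at the very first step. Once you are inside the induction, $\Lk(v,C)$ is merely a subcomplex of a $2$-sphere: it can be disconnected, have several boundary components, etc. The fix is immediate: a \emph{proper} subcomplex of $S^2$ is planar, so Lemma~\ref{lem:planar} still furnishes a perfect Morse function on it; only the full-$S^2$ case requires the separate peeling argument you gave. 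So Lemma~\ref{lem:planar} is not replaced here --- it is supplemented by the $2$-sphere case.

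A smaller slip: your justification for acyclicity of the glued matching is stated backwards. The faces of $\Lk(v,C)$ are \emph{not} matched ``downward into $\St(v,C)$'' by $f''$; the coning recipe only matches faces that contain $v$. The faces of $\Lk(v,C)$ sit in $C-v$ and are handled by $f$. Acyclicity holds for the right reason: any gradient path that leaves the set of faces containing $v$ can never return to it, so a cycle would have to live entirely in $C-v$ or entirely among faces through $v$, and both matchings are separately acyclic.
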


\begin{Cor}[Chillingworth] \label{cor:chillingworth}
Let $B$ be a $3$-ball that admits a convex linear embedding in $\mathbb{R}^3$. Then $B$ is non-evasive and collapsible.
\end{Cor}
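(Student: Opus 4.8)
The plan is to deduce this directly from Theorem~\ref{thm:tomo3ball}, using Aumann's theorem to pass from ``convex'' to ``tight''. First recall that a convex linear embedding is in particular tight: this is immediate from Theorem~\ref{thm:Aumann}, or alternatively from the elementary fact that every halfspace section $h^+\cap|B|$ of a convex body is again convex, hence contractible, so the inclusion-induced maps $H_i(h^+\cap|B|)\to H_i(|B|)$ are trivially injective. Thus the given convex embedding $|B|\subset\R^3$ is $\pi$-tight for almost every nonzero vector $\pi$.

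Next I would choose the direction $\pi$ with some care, so that Theorem~\ref{thm:tomo3ball} applies verbatim. The set of $\pi\in\R^3\setminus\{0\}$ that fail to be in general position with respect to $B$ --- those for which two vertices of $|B|$ have the same value of $\langle\pi,-\rangle$ --- is a finite union of linear hyperplanes, hence of measure zero. Deleting it from the full-measure set of tightness directions still leaves a set of full measure, so we may fix a single $\pi$ that is simultaneously a tightness direction for $|B|$ and in general position with respect to $B$. For this $\pi$, the embedding $|B|$ satisfies exactly the hypotheses of Theorem~\ref{thm:tomo3ball}.

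Since $B$ is a $3$-ball, it is contractible, so all its reduced homology groups vanish. Applying Theorem~\ref{thm:tomo3ball} then yields at once that $B$ admits a perfect discrete Morse function, and --- by the second assertion of that theorem, which requires precisely the triviality of the homology --- that $B$ is non-evasive. Finally, non-evasive complexes are collapsible (Kahn--Saks--Sturtevant; one could also invoke Forman's characterization of collapsibility as ``acyclic and perfect''), so $B$ is collapsible as well.

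I do not expect any real obstacle here: the corollary is essentially a repackaging of Theorem~\ref{thm:tomo3ball}. The one point that deserves an explicit line is the selection of $\pi$, namely that the full-measure condition ``$\pi$-tight'' is compatible with the generic condition ``$\pi$ in general position with respect to $B$'', which it visibly is. The remaining ingredients --- a $3$-ball has trivial homology, and non-evasiveness implies collapsibility --- are standard.
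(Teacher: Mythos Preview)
Your proof is correct and follows essentially the same route as the paper: observe that a convex embedding is $\pi$-tight (since halfspace sections of a convex body are convex, hence contractible), then invoke Theorem~\ref{thm:tomo3ball} together with the trivial homology of a $3$-ball to conclude non-evasiveness, and hence collapsibility. The only difference is cosmetic: the paper notes directly that convexity gives $\pi$-tightness for \emph{every} $\pi$, so no measure-zero bookkeeping is needed before choosing $\pi$ in general position.
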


\begin{proof}
A convex $3$-ball is $\pi$-tight for any $\pi$: In fact, the intersection of any open halfspace with a convex ball is still convex and thus contractible; compare Theorem~\ref{thm:Aumann}. By Theorem~\ref{thm:tomo3ball}, $B$ is non-evasive, so in particular it is collapsible. 
\end{proof}

\begin{example} \em Rudin's $3$-ball is a non-shellable subdivision of a tetrahedron in $\mathbb{R}^3$. Since the subdivision is linear, it is non-evasive. An explicit sequence of vertices that proves non-evasiveness of Rudin's ball has been recently found in \cite{BenedettiLutz}.
\end{example}

\begin{proposition}[Nested polytopes]\label{pro:2polyt}
Let $P,\ Q$ be convex simplicial polytopes in $\R^3$ with $Q \subset P$. Suppose that $\partial P$ and $\partial Q$ intersect in at most $2$ vertices. 
Let $C$ be a linear subdivision of the space $|P| \setminus \operatorname{int} |Q|$, such that the triangulation of $C$ restricted to $\partial P$ resp. $\partial Q$ coincides with the natural triangulation of $\partial P$ resp. $\partial Q$. For any facet $\sigma$ of $\partial P$, we have $C \, \searrow (\partial P - \sigma) \cup \partial Q$. 
\end{proposition}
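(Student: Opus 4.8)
The plan is to reduce the statement to the planar-collapsibility machinery already developed, by "slicing" the region $|P|\setminus\operatorname{int}|Q|$ with a sweeping hyperplane in a generic direction $\pi$ and running an induction on vertices, exactly in the spirit of the proof of Theorem~\ref{thm:tomo3ball}. First I would fix a direction $\pi$ in general position with respect to $C$ (so no two vertices of $C$ share a $\langle\pi,-\rangle$-value), and additionally chosen so that the given facet $\sigma$ of $\partial P$ lies ``at the bottom'', i.e.\ all vertices of $\sigma$ have smaller $\langle\pi,-\rangle$-value than any other vertex of $C$; this is possible since $\sigma$ is a facet of the convex polytope $P$, so it supports a hyperplane, and we may tilt $\pi$ slightly towards $-\,$(outer normal of $\sigma$) while keeping general position. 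Sort the vertices of $C$ as $v_1,\dots,v_n$ by decreasing $\langle\pi,-\rangle$. The claim is that removing them top-down realizes the asserted collapse onto $(\partial P-\sigma)\cup\partial Q$.

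The key structural input is that $|P|\setminus\operatorname{int}|Q|$ is (topologically) a $3$-ball: since $\partial P$ and $\partial Q$ meet in at most two vertices, $Q$ sits ``essentially inside'' $P$, and removing the interior of $Q$ from $P$ yields a space that deformation retracts onto $\partial P$ — indeed onto $(\partial P-\sigma)\cup\partial Q$, because $(\partial P-\sigma)$ is a disc and one can push $\partial Q$ out to the remaining boundary. Granting this, the heart of the argument is the sweep: let $v=v_1$ be the top vertex and consider the hyperplane $h$ just below it, so (as in Lemma~\ref{lem:acyclic}) $h^+\cap|C|$ is combinatorially $C-v$ and $h\cap|C|$ is combinatorially $\Lk(v,C)$. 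Because $C$ is a subdivision of a convex-minus-convex region and $v$ is topmost, $\Lk(v,C)$ is a \emph{planar} complex — it embeds in the slicing plane $h$ — and in fact it is either a disc (if $v$ is interior to the region or lies on $\partial Q$, by local convexity of $Q$ seen from outside) or a disc-with-structure coming from $\partial P$; in all cases $\Lk(v,C)$ is contractible, hence by Lemma~\ref{lem:planar} a cone-like collapse is available. Therefore $\St(v,C)$ collapses onto $\Lk(v,C)$, so $C\searrow C-v$, and $C-v$ is again a linear subdivision of the same type of region (with $Q$ possibly replaced by $Q$ and $P$ replaced by a truncated convex body, or simply: a subcomplex of a planar-over-$h$ picture), so induction applies. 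The base case is when only the facet $\sigma$ and $\partial Q$ (and whatever connects them) remain, which is exactly $(\partial P-\sigma)\cup\partial Q$, using Lemma~\ref{lem:planar2} to absorb the last collapses of a complex onto a deformation retract inside a planar ambient slice.

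I expect the main obstacle to be the bookkeeping of \emph{which} complex $C-v$ is after each vertex removal, and ensuring the link $\Lk(v,C)$ stays contractible (not merely planar) throughout the sweep — in particular handling the finitely many vertices shared by $\partial P$ and $\partial Q$, and the moment when the sweeping plane first touches $Q$. Near those vertices the link is a disc glued from an ``outer'' $\partial P$-part and an ``inner'' $\partial Q$-part along the two shared vertices, and one must check this union is still contractible (it is, since two discs glued along a contractible set — at most two points, but the gluing locus in the link is an arc — is a disc). A clean way to avoid re-deriving this each time is to argue once and for all that at every stage $C'\subseteq C$ in the sweep, $|C'|$ is a $3$-ball containing $(\partial P-\sigma)\cup\partial Q$ as a deformation retract, and then invoke Lemma~\ref{lem:acyclic}-style exact sequences to conclude $\Lk(v,C')$ has trivial reduced homology, hence (being planar) is non-evasive by Lemma~\ref{lem:planar}, hence collapsible; this makes each step a legitimate collapse and closes the induction. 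The final cleanup — that the residual complex supported on $(\partial P-\sigma)\cup\partial Q$ is already collapsed down to it — follows from Lemma~\ref{lem:planar2} applied to the planar complex obtained by a last slice, or simply because at that point no interior or $\sigma$-side vertices remain to delete.
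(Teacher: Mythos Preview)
Your sweep is the right framework, but there is a genuine gap in the inductive step. The move ``$\St(v,C)$ collapses onto $\Lk(v,C)$, hence $C\searrow C-v$'' deletes the vertex $v$ entirely. But the topmost vertex of $C$ always lies on $\partial P$ (by convexity of $P$), and every vertex of $\partial P$ already belongs to the target $(\partial P-\sigma)\cup\partial Q$; so you are not allowed to delete it, and the induction never gets off the ground. (Relatedly, $|P|\setminus\operatorname{int}|Q|$ is not a $3$-ball: when $Q\subset\operatorname{int}P$ it is homeomorphic to $S^2\times[0,1]$, and your later sentence ``at every stage $|C'|$ is a $3$-ball containing $(\partial P-\sigma)\cup\partial Q$'' is inconsistent with having deleted boundary vertices from $C'$.) What the paper does instead is keep $v$ and collapse only the $3$-dimensional part of its star: one shows that the planar complex $\Lk(v,C')$ collapses not to a point but onto the subcomplex $\bigl(\Lk(v,\partial P)\cup\Lk(v,\partial Q)\bigr)\cap\Lk(v,C')$, invoking Lemma~\ref{lem:planar2} (planar complex collapses onto any deformation-retract subcomplex) rather than Lemma~\ref{lem:planar}. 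Lifting this link-collapse to the star yields $C'\searrow (C'-v)\cup\St\!\bigl(v,(\partial P-\sigma)\cup\partial Q\bigr)$, which preserves all boundary faces through $v$ and lets the sweep continue.

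A second, related problem is the placement of $\sigma$. With $\sigma$ at the bottom, the sweep would at best terminate at $\partial P\cup\partial Q$, and you would then need $\partial P\cup\partial Q\searrow(\partial P-\sigma)\cup\partial Q$; but $\partial P$ is a closed surface, so $\sigma$ is not a free face there and cannot be removed by a collapse. The paper instead chooses $\pi$ so that a vertex of $\sigma$ is \emph{highest}: then $\sigma$ is a free $2$-face of $C$ and is removed at the very first step, after which the sweep proceeds through a four-case analysis of the current top vertex (on $\partial P$ only, on $\partial Q$ only, on both, or the initial $\sigma$-vertex), each case disposed of by a single application of Lemma~\ref{lem:planar2}.
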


\begin{proof}
Since $\partial P$ and $\partial Q$ intersect in at most $2$ vertices, $\sigma$ is not in $\partial Q$. The space $|P| \setminus \operatorname{int} |Q|$ is tight, so $C$ is embedded tightly. Choose a direction $\pi$ such that a certain vertex of $\sigma$ maximizes $\langle \pi, - \rangle$, and assume that such vertex is not in $Q$. We proceed as in Theorem \ref{thm:tomo3ball}, recursively deleting the top vertex of $C$ in the order induced by the function $\langle \pi, - \rangle$. In fact, at each step, let us denote by $C$ resp.~$C'$ the complex before resp. after the deletion of the top vertex. If $C$ is $\pi$-tight, obviously $C'$ is still $\pi$-tight. Let us look at the highest vertex $v$ of $C'$ (i.e. the vertex maximizing  $\langle \pi, - \rangle$ on $C'$) and study $\Lk (v, C')$. There are four cases:
\begin{compactenum}[(i)]
\item If $v$ is the top vertex of $|P| \setminus \operatorname{int} |Q|$, then $\Lk(v,C')=\Lk(v,C)$ is a 2-ball and $\Lk(v,\partial P) = \Lk(v,\partial P) \, \cap \, \Lk(v,C')$ is its boundary. After the deletion of the free face $\Lk(v,\sigma)$, $\Lk(v,C')$ collapses onto $\Lk(v,\partial P)$ by Lemma \ref{lem:planar2}.
\item If $v$ is in $\partial P$ but not in $\partial Q$, then $\Lk(v,C')$ is a triangulation of a contractible planar complex, of which $\Lk(v,\partial P)\cap \Lk(v,C')$ is a contractible subcomplex. By Lemma \ref{lem:planar2}, the former collapses onto the latter.
\item If $v$ is in $\partial Q$ but not in $\partial P$, then $\Lk(v,C')$ is a 2-ball, of which $\Lk(v,\partial Q) \, \cap \, \Lk(v,C')$ is a contractible subcomplex. By Lemma \ref{lem:planar2}, the former collapses to the latter. 
\item If $v$ is in $\partial Q \cap \partial P$, then $\Lk(v,C')$ is a contractible planar complex, of which $(\Lk(v,\partial Q)\cup \Lk(v,\partial P) ) \cap \Lk(v,C')$ is a contractible subcomplex. By Lemma \ref{lem:planar2}, we can collapse the former onto the latter. 
\end{compactenum}
\vskip-5mm
\end{proof}

\enlargethispage{4mm}
\vskip-1mm
\section{Non-evasive balls without convex realizations} \label{sec:knots}
In this section, we show that the converses of Corollary~\ref{cor:chillingworth} and Theorem~\ref{thm:tomo3ball} do not hold. In fact, we show that a $3$-ball can be non-evasive and collapsible, without admitting any $\pi$-tight realization in $\mathbb{R}^3$ {\bf (Theorem~\ref{thm:PiTightNonCollapsible})}. In general, \emph{proving} that a certain realization is $\pi$-tight, is a relatively easy task; in contrast, \emph{excluding} the existence of $\pi$-tight realizations for some non-evasive complex, seems a much harder problem. To deal with it, we have to recall and expand some classical results in knot theory. 

Let us start by recalling the basics. For the definitions of knot, diagram and bridge index, we refer the reader to the textbook by Kawauchi~\cite[pp.~7--18]{KAWA}. All the knots we consider are \emph{tame}, that is, realizable as $1$-dimensional subcomplexes of some triangulated $3$-sphere. A \emph{connected sum} of two given knots is obtained by cutting out a tiny arc from each, and then by sewing the resulting curves together along the boundary of the cutouts. The bridge index of a connected sum of two knots equals the sum of their bridge indices, minus one. The \emph{trivial} knot is one that bounds a disc. A knot is called \emph{composite} if it is the connected sum of two non-trivial knots, and \emph{prime} otherwise. 

A \emph{$K$-knotted
spanning arc} of a $3$-ball $B$ is a path $P$ of consecutive interior
edges of~$B$, such that the two endpoints of~$P$ lie on the
boundary $\partial B$, and any path on $\partial B$ between these endpoints
completes $P$ to the knot $K$. (Since $S^2$ is simply-connected, the knot type does not depend on the boundary path chosen.) Note that the relative interior of the arc is allowed to intersect the boundary of the $3$-ball~\cite{EH}. If the arc consists of one edge only, we call it \emph{$K$-knotted spanning edge}.

The minimal number of edges needed to realize a given knot \emph{as a polygonal path in $\mathbb{R}^3$} is known as \textsc{stick number} of the knot. The trivial knot has stick number $3$; the trefoil knot and its mirror image have stick number $6$; all other knots have stick number $\ge 7$~\cite[Theorem~4]{Calvo}. The stick number can be arbitrarily large: For example, the connected sum of $t$ trefoils has stick number $2t + 4$~\cite{Rolfsen}. Even knots of fixed bridge number can have arbitrarily large stick number: For example, the knot of Figure \ref{fig:1039} has bridge index $2$ and stick number $13$. 

\medskip
\begin{figure}[htbf]
	\centering
\includegraphics[width=.27\linewidth]{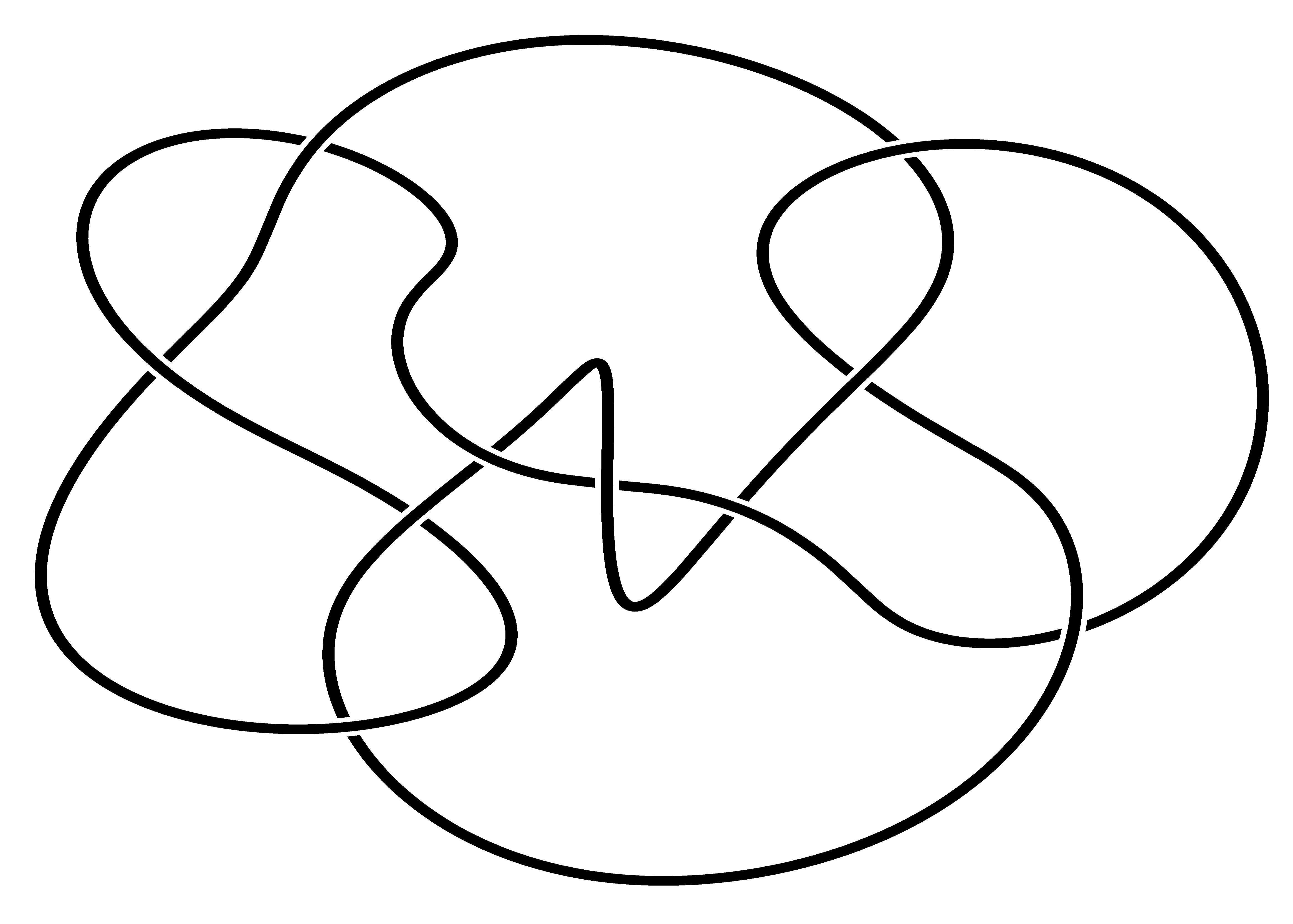}
\caption{\footnotesize The knot $10_{39}$ from Rolfsen's table~\cite{Rolfsen}.}
    \label{fig:1039}
\end{figure}

In contrast, the minimal number of edges needed to realize a given knot \emph{in some triangulated $3$-sphere or $3$-ball} does not depend on the knot, and cannot be arbitrarily large. In fact, this number is always $3$. This was known already in the Sixties, by the work of Bing and others: 

\begin{proposition}[{Furch~\cite[p.~73]{Furch}, Bing~\cite[p.~110]{BING}}] \label{prop:Bing}
Any knot can be realized as knotted spanning edge in some triangulated $3$-ball, linearly embedded in $\mathbb{R}^3$. 
\end{proposition}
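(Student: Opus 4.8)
The plan is to give an explicit construction, following the classical idea of Furch and Bing: take any tame knot $K$, realize it as a polygonal closed curve $\gamma$ in $\mathbb{R}^3$, and then build a triangulated $3$-ball around a single edge so that this edge becomes $K$-knotted. First I would fix a PL-embedded polygon $\gamma \subset \mathbb{R}^3$ representing $K$, and pick one edge $e_0$ of $\gamma$. The remaining arc $\gamma \setminus \operatorname{int}(e_0)$ is an unknotted spanning arc of a large ball; the content of the statement is to ``hide'' the knottedness of $\gamma$ inside the $3$-ball while leaving only \emph{one} edge in the interior.

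The key steps, in order, are as follows. (1) Thicken $\gamma$ to a solid torus $N$ (a regular neighborhood), and let $e_0 \subset N$ be the chosen edge, with its two endpoints pushed onto $\partial N$. (2) Drill $N$ out of a big ball $B_0 \supset N$; the complement $B_0 \setminus \operatorname{int} N$ is a $3$-ball (since $\gamma$ is a knot in $S^3$, the complement of an \emph{unknotted} handlebody neighborhood is a handlebody, but here we want the complement-with-a-tube structure), and $e_0$ together with a boundary arc on $\partial N$ forms $K$. (3) Triangulate everything linearly in $\mathbb{R}^3$: choose a fine enough simplicial subdivision of $B_0$ that contains $e_0$ as an edge, contains $\gamma$ as a subcomplex, and is linear. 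This is possible because all the pieces are PL and embedded in $\mathbb{R}^3$, so a common linear refinement exists. The resulting ball $B$ has $e_0$ as an interior edge whose endpoints lie on $\partial B$, and any boundary path completing $e_0$ runs parallel to $\gamma \setminus \operatorname{int}(e_0)$, hence closes up to the knot $K$. (4) Finally, verify that the ambient complex really is a simplicial $3$-ball and that $e_0$ is genuinely interior (both endpoints on the boundary, relative interior meeting $\partial B$ at worst in finitely many points, which is permitted by the cited convention from~\cite{EH}).

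The main obstacle I expect is step (2)--(3): making sure that after drilling out the knotted tube and subdividing, the result is still a PL $3$-ball rather than some more complicated manifold, and that a \emph{single} edge suffices rather than a longer knotted spanning arc. The trick, due to Bing, is that one does not need the whole knot $\gamma$ inside $B$: one only needs a tube in $B$ that is knotted as $K$, together with one straight interior edge threading it; the ``knottedness'' is carried by the shape of the tubular hole in $\partial B$, not by a long interior path. Concretely, one takes $B$ to be a linearly triangulated ball whose boundary $2$-sphere has a thin knotted ``finger'' pushed deep into the interior, nearly touching a short interior edge $e_0$; then $e_0$ plus a path down one side of the finger, across its tip, and up the other side traces out $K$. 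Checking that such a ``knotted finger'' ball admits a linear triangulation in $\mathbb{R}^3$ is the technical heart, and it is precisely what Furch's and Bing's explicit examples accomplish. Since the proposition merely cites these sources, in the paper I would either reproduce Bing's explicit coordinates or simply invoke~\cite[p.~73]{Furch} and~\cite[p.~110]{BING} for the construction and its verification.
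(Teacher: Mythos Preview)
Your proposal eventually lands on the right construction, but the route there is tangled and your initial plan contains a genuine error. In step~(2) you write that ``the complement $B_0 \setminus \operatorname{int} N$ is a $3$-ball''; this is false. If $N$ is a regular neighborhood (a solid torus) of any closed curve $\gamma$ in a $3$-ball $B_0$, the complement $B_0 \setminus \operatorname{int} N$ is never a ball: it is homotopy equivalent to the knot exterior, which has nontrivial fundamental group even for the unknot. You seem to sense this in your parenthetical, and you then abandon the plan in your ``main obstacle'' paragraph in favor of the ``knotted finger'' picture. That second picture is correct and is exactly what the paper does.

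The paper's proof is the direct version of your second description, stated in three lines: start with a finely triangulated $3$-ball in $\mathbb{R}^3$; drill a tubular hole along the knot $K$ from the top toward the bottom; stop \emph{one edge short} of perforating the ball. The result is still a $3$-ball $B$, the last undeleted edge $[x,y]$ is a spanning edge, and any boundary path from $x$ to $y$ must climb back up through the tube, so it closes $[x,y]$ to a curve isotopic to $K$. There is no need for your steps~(1)--(3) with torus neighborhoods and refinements: since you drill by removing open stars of interior simplices from a linear triangulation, the construction is automatically simplicial and linearly embedded in $\mathbb{R}^3$, and it stays a ball because each removal is an elementary collapse (or, more plainly, because you are removing a tree of tetrahedra). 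I would drop the first plan entirely and present only the drilling-with-a-stop argument.
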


\begin{proof}
Let us take a finely triangulated $3$-ball in $\mathbb{R}^3$ and drill a tubular hole in it, from the top to the bottom, along the chosen knot $K$. If we stop
one step before ``perforating'' the $3$-ball completely (that is, if we stop when
we are at a distance of one edge $[x,y]$ from the bottom), we obtain a $3$-ball $B$
with a $K$-knotted spanning edge. In fact, any path in $\partial B$ from $x$ to $y$ must ``climb up'' the tubular hole, and thus it closes up $[x,y]$ to a knot isotopic to $K$. 
\end{proof}

\begin{Cor} \label{cor:bing}
Any knot can be realized as $3$-edge subcomplex of some triangulated $3$-sphere (or of some triangulated $3$-ball, which is not linearly embeddable in $\R^3$ if the knot is non-trivial).
\end{Cor}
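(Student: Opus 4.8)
The plan is to deduce the corollary from Proposition~\ref{prop:Bing} by ``closing up'' the knotted spanning edge through a single auxiliary vertex, and then to detect non-embeddability via the trivial observation that a $3$-edge cycle in $\R^3$ is a flat triangle.

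First I would apply Proposition~\ref{prop:Bing} to obtain a triangulated $3$-ball $B$, linearly embedded in $\R^3$, carrying a $K$-knotted spanning edge $\{x,y\}$, where $x,y\in\partial B$. Let $w$ be a new vertex and form the $3$-ball $w\ast\partial B$, the cone over the $2$-sphere $\partial B$; its boundary is $\partial B$. Gluing it to $B$ along $\partial B$ produces $S:=B\cup(w\ast\partial B)$: since the two $3$-balls meet exactly in the subcomplex $\partial B$, with matching triangulations, $S$ is a genuine simplicial complex, and topologically it is $S^3$. Because $w$ is joined to every vertex of $\partial B$, the edges $\{x,w\}$ and $\{w,y\}$ lie in $S$; together with $\{x,y\}$ they form a $3$-edge subcomplex $\Gamma\subset S$ whose underlying space is a circle. (Note that $\{x,y,w\}$ is not a face of $S$: the edge $\{x,y\}$ is interior to $B$, hence not an edge of $\partial B$, so no triangle on $x$, $y$, $w$ can occur.)

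Next I would identify the knot type of $\Gamma$. Choose any embedded path $\gamma$ in $\partial B$ from $x$ to $y$. The cone $D:=w\ast\gamma$ is an embedded disc in $w\ast\partial B$, and its boundary circle is split by the pair $\{x,y\}$ into the arc $A:=\{x,w\}\cup\{w,y\}$ and the arc $\gamma$; hence $A$ is isotopic, rel $\{x,y\}$ and through $D$, to $\gamma$. This isotopy is supported inside the subball $w\ast\partial B\subset S$, so $\Gamma=\{x,y\}\cup A$ is ambient isotopic in $S$ to $\{x,y\}\cup\gamma$, which by the very definition of a $K$-knotted spanning edge is the knot $K$. Thus $K$ is realized as a $3$-edge subcomplex of the triangulated $3$-sphere $S$.

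For the $3$-ball version, let $\tau$ be any tetrahedron of $S$ and put $B'':=S-\tau$. Then $\Gamma\subset B''$ (a $1$-dimensional complex cannot contain the $3$-face $\tau$), and $|B''|=|S|\setminus\operatorname{int} |\tau|$ is a $3$-ball, triangulated by $B''$. If $B''$ admitted a linear embedding in $\R^3$, the cycle $\Gamma$ would be carried to three straight segments, i.e.\ to the boundary of a Euclidean triangle, which is unknotted; so $K$ would be trivial. Hence for non-trivial $K$ the ball $B''$ has no linear embedding in $\R^3$. The one genuinely delicate point is the knot computation in the third paragraph: one must check that the isotopy carrying $A$ to $\gamma$ really stays inside $S$ (it does, being confined to the subball $w\ast\partial B$) and must invoke ``spanning edge'' in exactly the sense recalled before Proposition~\ref{prop:Bing}, namely that \emph{every} boundary path closing up $\{x,y\}$ yields $K$; the remaining points (that $S$ is a bona fide simplicial $3$-sphere, that $S-\tau$ triangulates a $3$-ball, and that $\Gamma$ is a subcomplex) are routine.
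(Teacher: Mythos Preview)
Your proof is correct and follows essentially the same construction as the paper: coning off $\partial B$ with a new vertex to form the sphere $S_B=\partial(w\ast B)=B\cup(w\ast\partial B)$, reading off the $3$-edge cycle through $x$, $y$, and the apex, and then deleting a facet to pass to a ball. You are in fact more careful than the paper's own proof, which neither verifies the knot type of the $3$-cycle explicitly nor addresses the non-embeddability clause; your disc-slide argument for the former and stick-number observation for the latter are both sound.
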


\begin{proof}
Let $B$ be a $3$-ball with a $K$-knotted spanning edge $[x,y]$. Let $v$ be a new vertex. The $3$-sphere $S_B := \partial (v \ast B)$ contains a $3$-edge subcomplex $[x,y] \cup [y,v] \cup [y,v]$ isotopic to $K$. Removing any tetrahedron from $S_B$ one gets a $3$-ball with the same knotted subcomplex. 
\end{proof}

In 1969, Lickorish and Martin obtained a more sophisticated result. 

\begin{proposition}[{Lickorish--Martin \cite{LICKMAR}; Hamstrom--Jerrard~\cite{HamstromJerrard}; see also Benedetti--Ziegler~\cite[Theorem~3.23]{BZ}}] \label{prop:LickMar}
Any knot of bridge index~$2$ can be realized as knotted spanning edge in some \emph{collapsible} $3$-ball (which might not be  linearly embeddable in $\R^3$).
\end{proposition}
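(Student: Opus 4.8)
The plan is to mimic the classical "drill-a-hole" construction of Proposition~\ref{prop:Bing}, but carry it out inside a collapsible ball rather than a convex one, and to keep track of collapsibility at each stage. The starting point is the standard simplex $\Delta^3$ with a fine linear triangulation, which is collapsible by Corollary~\ref{cor:chillingworth}. A knot of bridge index~$2$ admits a diagram with exactly two maxima and two minima in some height function; equivalently, it can be presented as the union of two \emph{bridges} (two unknotted overarcs) joined by two underarcs. The key combinatorial device is that such a knot can be isotoped to lie in ``almost-spanning'' position inside a ball: there is a spanning arc of the ball that, together with a short boundary arc, realizes $K$. So what one really wants is a collapsible ball $B$ containing the knotted spanning edge $[x,y]$.

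The main steps, in order, would be the following. First, build a thickened tubular neighborhood $N$ of a polygonal representative of $K$ inside a finely triangulated $\Delta^3$, and stop the drilling one edge short of perforation, exactly as in the proof of Proposition~\ref{prop:Bing}; this produces a $3$-ball $B$ with a $K$-knotted spanning edge $[x,y]$. Second --- and this is where the bridge-index-$2$ hypothesis enters --- choose the drilling so that the tube $N$ is \emph{unknotted in $B$ before the last edge is removed}; concretely, because $K$ has bridge index $2$, the solid torus $N$ can be taken to be a standardly-embedded (unknotted) handlebody of genus $1$ in $\Delta^3$, with the ``hard'' knotting only appearing once $[x,y]$ is carved away. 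Then $\Delta^3 \setminus \operatorname{int} N$ is a handlebody complement of an unknot, hence a collapsible manifold (a thickened disc-with-a-handle-removed), and one shows it collapses onto a spine; adding back the single tetrahedral sliver adjacent to $[x,y]$ keeps collapsibility since an elementary expansion across one free face is the inverse of an elementary collapse. Third, triangulate all of this simplicially and fine enough that the collapses are realized by genuine elementary simplicial collapses --- here one invokes Lemma~\ref{lem:planar2} (or its $3$-dimensional analogue used in Theorem~\ref{thm:tomo3ball}) link by link, exactly as in the nested-polytope argument of Proposition~\ref{pro:2polyt}, peeling off top vertices in a generic height direction and checking that each vertex link collapses appropriately. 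Finally, to get the $3$-sphere version and the subcomplex-of-$S^3$ phrasing, cone off as in Corollary~\ref{cor:bing}: $\partial(v \ast B)$ is a triangulated $S^3$ containing a $3$-edge path isotopic to $K$, and one verifies the cone or a suitable once-punctured version inherits a perfect (indeed, here acyclic, so collapsible) discrete Morse function.

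The hard part will be the \emph{second} step: arranging the drilling so that the complement is collapsible rather than merely contractible. Knot complements of nontrivial knots are notoriously not collapsible --- that is exactly the content of the Bing-type examples showing knotted spanning edges obstruct collapsibility --- so one cannot drill the knot ``all the way''; the entire subtlety is that, with bridge index $2$, the obstruction can be concentrated into the single missing edge $[x,y]$ while the rest of $B$ stays collapsible. Making this precise requires a careful normal-form description of bridge-$2$ knots (a $2$-bridge presentation, or Schubert normal form) and a matching choice of triangulation in which the tube is standardly embedded; this is essentially the Lickorish--Martin / Hamstrom--Jerrard construction, and the work is in verifying that the specific combinatorial model collapses, step by step, to a spine. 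Everything after that --- the link-by-link collapsing in Step~3 and the coning in Step~4 --- is routine given Lemma~\ref{lem:planar2} and the techniques already deployed in Theorem~\ref{thm:tomo3ball} and Proposition~\ref{pro:2polyt}.
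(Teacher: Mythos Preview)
The paper does not supply its own proof of this proposition: it is quoted with attribution to Lickorish--Martin, Hamstrom--Jerrard, and Benedetti--Ziegler, and then used as a black box in Corollary~\ref{cor:lickmar} and Proposition~\ref{prop:LickorishClaim}. So there is no in-paper argument to compare your outline against; you are effectively sketching what you think the cited references do.

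That said, your sketch contains a genuine confusion that would have to be repaired before it could become a proof. You describe the drilled region $N$ first as ``a thickened tubular neighborhood of a polygonal representative of $K$'' and then as ``a standardly-embedded (unknotted) handlebody of genus~$1$'', with complement ``a handlebody complement of an unknot''. These are incompatible. In the Furch--Bing construction one drills a thickened \emph{arc} from the boundary of $\Delta^3$ inward, stopping one edge short; the removed region is a $3$-ball, not a solid torus, and its complement is again a $3$-ball --- this is precisely why $B$ is a ball at all. If instead $N$ were a solid torus neighbourhood of the closed knot $K$, its complement in $\Delta^3$ would be a knot exterior, not a ball, and there would be no spanning edge to speak of. Moreover, a tubular neighbourhood of a nontrivial knot $K$ is never an unknotted solid torus in $\Delta^3$, so the clause ``the solid torus $N$ can be taken to be standardly embedded'' cannot be arranged by any choice of drilling.

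The actual arguments in the cited references exploit the $2$-bridge hypothesis differently: one places the knotted spanning arc so that it decomposes into two subarcs, each unknotted in a subball of $B$ (this is exactly what bridge index~$2$ buys), and then exhibits an explicit simplicial collapse of a carefully built triangulation that respects this decomposition. Your instinct that ``the obstruction can be concentrated into the single missing edge $[x,y]$'' is morally right, but making it precise requires that explicit combinatorial model rather than the torus-complement picture you describe.
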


\begin{Cor}[Lickorish~\cite{LICK}] \label{cor:lickmar}
Any knot of bridge index~$\le 2$ can be realized as $3$-edge subcomplex of some \emph{perfect} $3$-sphere (or $3$-ball).
\end{Cor}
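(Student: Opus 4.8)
The plan is to rerun the proof of Corollary~\ref{cor:bing}, but feeding in the \emph{collapsible} ball provided by Proposition~\ref{prop:LickMar} rather than an arbitrary one, and then to check that the sphere produced is perfect. Fix a knot $K$ of bridge index at most $2$. When $K$ is nontrivial, Proposition~\ref{prop:LickMar} supplies a collapsible $3$-ball $B$ carrying a $K$-knotted spanning edge $[x,y]$, with $x,y\in\partial B$; when $K$ is trivial one can instead take for $B$ the join of an edge $[x,y]$ with a triangulated circle, a collapsible $3$-ball carrying an unknotted spanning edge $[x,y]$. So in all cases we have a collapsible $3$-ball $B$ together with a $K$-knotted spanning edge $[x,y]$.

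First I would introduce a new vertex $v$ and set $S_B:=\partial(v\ast B)=B\cup(v\ast\partial B)$, a triangulation of $S^3$, exactly as in Corollary~\ref{cor:bing}. The edges $[x,y]$, $[y,v]$, $[v,x]$ form a $3$-edge subcomplex of $S_B$, and it is isotopic to $K$: the arc $[y,v]\cup[v,x]$ lies in the ball $v\ast\partial B$ and joins two of its boundary points, hence is isotopic rel endpoints inside that ball to an arc contained in $\partial B$, and closing $[x,y]$ up by any arc in $\partial B$ yields $K$ by hypothesis.

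The crux is to show $S_B$ is perfect, and for this it is enough to produce one facet $\Sigma$ of $S_B$ with $S_B-\Sigma$ collapsible. Indeed a discrete Morse function witnessing collapsibility of $S_B-\Sigma$ has a single critical cell (a vertex); giving $\Sigma$ a value above all the others extends it to a discrete Morse function on $S^3\cong S_B$ with exactly one critical $0$-cell and one critical $3$-cell, which is perfect since $\beta_0(S^3)=\beta_3(S^3)=1$ and $\beta_1(S^3)=\beta_2(S^3)=0$. Take $\Sigma:=v\ast\sigma$ for a facet $\sigma$ of $\partial B$; a face-by-face inspection gives $S_B-\Sigma=B\cup(v\ast D)$, where $D:=\partial B-\sigma$ is a $2$-sphere with an open triangle removed --- a $2$-ball, collapsible by Lemma~\ref{lem:planar} --- and where $B\cap(v\ast D)=D$, since no face of $B$ contains $v$. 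Now one uses the elementary fact that $Y\cup(v\ast D)\searrow Y$ whenever $D$ is a collapsible subcomplex of $Y$ with $v\notin Y$: along a collapsing sequence $D\searrow\{p\}$, each elementary collapse $(\alpha,\beta)$ of $D$ lifts to the elementary collapse $(v\ast\alpha,\,v\ast\beta)$ of the current complex (the only faces above $v\ast\alpha$ are the $v\ast\gamma$ with $\alpha\subseteq\gamma$ in $D$), and at the end the free edge $[v,p]$ and then the free vertex $v$ are collapsed away. With $Y=B$ this gives $S_B-\Sigma\searrow B$; since $B$ is collapsible, $S_B-\Sigma$ is collapsible, so $S_B$ is perfect.

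For the ``$3$-ball'' half, note that $S_B-\Sigma$ is itself a collapsible --- hence perfect and acyclic --- $3$-ball still containing the $3$-edge subcomplex isotopic to $K$, since deleting the open facet $\Sigma$ destroys no edge; for nontrivial $K$ this ball cannot be linearly embedded in $\R^3$, as three straight sticks in $\R^3$ bound only the unknot. The steps demanding real care are the cone-collapsing fact $Y\cup(v\ast D)\searrow Y$ (routine, by induction along a collapse of $D$) and the isotopy argument pinning the knot type; the genuinely new ingredient over Corollary~\ref{cor:bing} is the observation that collapsibility of $B$ is transported, via the cone, to collapsibility of $S_B$ minus a facet --- precisely the condition that makes a $3$-sphere perfect.
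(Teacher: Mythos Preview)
Your argument is correct and follows essentially the same route as the paper: form $S_B=\partial(v\ast B)$ from the collapsible ball of Proposition~\ref{prop:LickMar}, remove the facet $v\ast\sigma$, and collapse the cone $v\ast(\partial B-\sigma)$ down onto $B$ using that the $2$-ball $\partial B-\sigma$ is collapsible. The only cosmetic difference is your treatment of the trivial knot: the paper simply invokes the boundary of the $4$-simplex, whereas you uniformly run the same cone construction starting from the collapsible ball $[x,y]\ast C_n$; both are fine.
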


\begin{proof}
If $K$ is the trivial knot, then $K$ appears as $3$-edge subcomplex of the boundary of the $4$-simplex, and the claim is obvious. If $K$ is a knot of bridge index~$2$, let $B$ be a collapsible $3$-ball with a $K$-knotted spanning edge. Construct $S_B = \partial (v \ast B)$ as in Corollary~\ref{cor:bing}.  If $\sigma$ is a facet of $\partial B$, the complex $\partial B - \sigma$ is a $2$-ball, and thus it is collapsible onto a vertex $w$. This implies that $v \ast \partial \sigma - v \ast \sigma$ collapses onto $\partial \sigma \cup (w \ast v)$. The same collapsing sequence shows also that $S_B - v \ast \sigma$ collapses onto $B \cup (w \ast v)$. But $B \cup (w \ast v)$ collapses onto $B$, which is collapsible. So, the removal of the facet $v \ast \sigma$ from $S_B$ yields a collapsible $3$-ball. This $3$-ball contains the knot $K$ as $3$-edge subcomplex. 
\end{proof}








In Proposition~\ref{prop:LickMar}, the bound on the bridge index is best possible. In fact, let $T_t$ be a connected sum of $t$ trefoils. The knot $T_t$ has bridge index $t+1$. By the work of Goodrick~\cite{GOO}, if $T_t$ appears as knotted spanning edge in a triangulated $3$-ball $B$, and $t \ge 2$, then $B$ cannot be collapsible.

Surprisingly, it turns out that the bound on the bridge index in Corollary~\ref{cor:lickmar} is \emph{not} best possible. We are going to show here that many knots of bridge index $3$, including the connected sums of two trefoil knots, live as $3$-edge subcomplexes inside some perfect triangulation of the $3$-sphere.

\begin{lemma}\label{lem:projective}
Let $Q$ be an arbitrary $d$-polytope. Let $v, w$ be two distinct vertices of $Q$. There is a polytope $Q'$ combinatorially equivalent to $Q$ and embedded in $\R^d$ so that the vertex of $Q'$ corresponding to $v$ maximizes the ``quota function'' $x_d$ on $Q'$, and the vertex corresponding to $w$ minimizes $x_d$ on $Q'$.
\end{lemma}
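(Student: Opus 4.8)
The plan is to obtain $Q'$ from an arbitrary geometric realization of $Q$ by a projective transformation chosen so that the supporting hyperplanes at $v$ and at $w$ become parallel, and then to apply an affine rotation making these two hyperplanes horizontal.

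First I would fix a geometric realization $P\subset\R^d$ of $Q$. Since every face of a polytope is exposed, there is a hyperplane $H_v$ meeting $P$ only in $v$ with $P$ contained in one of its closed sides, and likewise a hyperplane $H_w$ meeting $P$ only in $w$. Because $v\neq w$, the $(d-2)$-flat $F:=H_v\cap H_w$ is disjoint from $P$. Strictly separating the compact convex set $P$ from the closed convex flat $F$ yields a hyperplane with $F$ in one open side and $P$ in the other; since an affine subspace contained in an open halfspace is parallel to the bounding hyperplane, translating that hyperplane parallel to itself until it contains $F$ produces a hyperplane $G$ with $F\subset G$ and $G\cap P=\emptyset$.

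Next I would take a projective transformation $\tau$ of $\R^d$ sending $G$ to the hyperplane at infinity. Because $G$ misses $P$, the transformation $\tau$ is admissible on $P$, so $\tau(P)$ is a polytope combinatorially equivalent to $Q$, and $\tau$ carries each supporting hyperplane of $P$ to a supporting hyperplane of $\tau(P)$ touching the corresponding face. In particular $\tau(H_v)$ and $\tau(H_w)$ support $\tau(P)$ only in $\tau(v)$ and $\tau(w)$ respectively; moreover their intersection $\tau(F)$ now lies at infinity, so $\tau(H_v)\parallel\tau(H_w)$, and since $\tau(P)$ meets both of these supporting hyperplanes it must lie in the closed slab between them. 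Composing $\tau$ with an affine isometry that rotates this slab onto $\{0\le x_d\le 1\}$, carrying $\tau(H_v)$ to $\{x_d=1\}$ and $\tau(H_w)$ to $\{x_d=0\}$, gives a polytope $Q'\cong Q$ in which the vertex corresponding to $v$ uniquely maximizes $x_d$ and the vertex corresponding to $w$ uniquely minimizes it.

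The two points needing care are the existence of the hyperplane $G\supset F$ disjoint from $P$ — where the hypothesis $v\neq w$ is used precisely to ensure $F\cap P=\emptyset$ — and the standard fact that a projective transformation admissible on a polytope preserves its combinatorial type and sends supporting hyperplanes to supporting hyperplanes; granting these, the remaining verifications are routine.
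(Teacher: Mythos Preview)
Your argument is correct and follows essentially the same route as the paper: choose supporting hyperplanes $H_v$, $H_w$ at the two vertices, send their intersection to infinity by a projective transformation, and then rotate so the now-parallel images of $H_v$ and $H_w$ become level sets of $x_d$. You fill in more detail than the paper does --- in particular the admissibility of the projective transformation (via the auxiliary hyperplane $G\supset F$ missing $P$) --- and you might add one clause covering the trivial case where $H_v$ and $H_w$ can be chosen already parallel, but the approach is the same.
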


\begin{proof}
Let $H_v$ (resp. $H_w$) be a hyperplane tangent to $Q$ in $v$ (resp.~in $w$). Via a suitable projective transformation $\pi$, we can move the intersection of $H_v \cap H_w$ to the hyperplane at infinity, and make $\pi (H_v)$ and $\pi(H_w)$ parallel to $x_d=0$ in $\mathbb{R}^d$. Up to performing a further reflection, the image of $Q$ under the projective transformation $\pi$ is the polytope $Q'$ desired.
\end{proof}

\begin{lemma}\label{lem:collapsepyramid}
Let $A$ be the boundary of an $n$-gon $D$. For any simplicial $3$-polytope $Q$ and for any two vertices $v, w$ of $Q$, the ball $P = \operatorname{susp}(D)$ admits a decomposition into polytopes that
\begin{compactenum}[\em (1)]
\item contains a subcomplex $Q'$ combinatorially equivalent to $Q$,
such that $v$ and $w$ are identified with the apices of the suspension, and $Q' \cap \partial P = \{ v \} \cup \{ w \}$;
\item coincides with the triangulation $\operatorname{susp}(A)$ when restricted to the boundary of $P$;
\item collapses onto $Q' \, \cup \, (A \ast w)$.
\end{compactenum}
\end{lemma}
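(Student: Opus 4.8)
The plan is to construct the polytopal decomposition of $P = \operatorname{susp}(D)$ explicitly, by realizing $Q$ as a thin ``sliver'' spanning between the two apices, and then filling the remaining region with prisms. First I would invoke Lemma~\ref{lem:projective}: replace $Q$ by a combinatorially equivalent polytope $Q'$ in $\R^3$ in which the vertex corresponding to $v$ is the unique maximum of $x_3$ and the vertex corresponding to $w$ is the unique minimum of $x_3$. Rescale so that $Q'$ lives in a very thin horizontal slab, say $|x_1|, |x_2| \le \delta$ for $\delta$ small, with $v$ at the top apex level and $w$ at the bottom apex level. Now take the $n$-gon $D$ (together with its boundary $n$-cycle $A$) and place it in the hyperplane $x_3 = 0$, large enough to contain the projection of $Q'$ to that plane in its interior, and form $P = \operatorname{susp}(D)$ by coning $D$ to the two points $v$ (above) and $w$ (below). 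By construction $Q' \subset P$ and $Q' \cap \partial P = \{v\} \cup \{w\}$, giving item~(1), while on $\partial P$ the decomposition is just $\operatorname{susp}(A)$, giving item~(2).

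The substance is item~(3): the collapse onto $Q' \cup (A \ast w)$. To organize it, I would decompose the closure of $|P| \setminus |Q'|$ into polytopal cells. Since $Q'$ is a convex body inside the convex body $P$ meeting $\partial P$ only in the two apex points, the region between them is, topologically, a solid-torus-like gap that is nevertheless contractible onto $\partial P$ minus a facet together with $\partial Q'$; this is exactly the situation analyzed in Proposition~\ref{pro:2polyt} (nested polytopes whose boundaries meet in at most two vertices). Applying Proposition~\ref{pro:2polyt} with $P$ the outer polytope and $Q'$ the inner one, and choosing the facet $\sigma$ of $\partial P$ appropriately (a triangle in the lower cone $A \ast w$, incident to $w$), we get that the chosen polytopal subdivision $C$ of $|P| \setminus \operatorname{int}|Q'|$ collapses onto $(\partial P - \sigma) \cup \partial Q'$. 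Then the remaining free region $Q'$ itself, being a $3$-polytope with boundary $\partial Q'$ already present, contributes the collapse $C \cup Q' \searrow (\partial P - \sigma) \cup Q'$ trivially (push $Q'$ in from its boundary: a simplicial $3$-polytope collapses onto its boundary minus a facet, but here we want to keep $\partial Q'$, so instead we note $(\partial P - \sigma)\cup Q'$ is what we have). Finally $\partial P - \sigma$, where $\partial P = \operatorname{susp}(A)$ and $\sigma \subset A \ast w$, collapses onto $A \ast w$: the upper cone $A \ast v$ with its base $A$ retained collapses to $A$ (remove the triangles of $A\ast v$ with their free top-edges, then collapse away $v$), and $A \ast w$ minus the single triangle $\sigma$ together with $A$ glued on is homotopy equivalent to $A \ast w$ and collapses onto it by Lemma~\ref{lem:planar2} applied to the $2$-ball $A \ast w$. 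Composing, $C \cup Q' \searrow Q' \cup (A \ast w)$.

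The main obstacle I anticipate is making Proposition~\ref{pro:2polyt} literally applicable: that proposition is stated for \emph{simplicial} polytopes $P, Q$ with the subdivision restricting to the natural boundary triangulations, whereas here I want $P = \operatorname{susp}(D)$ with $D$ an $n$-gon (so $\partial P$ is $\operatorname{susp}(A)$, which is simplicial, good) but the outer shape is a bipyramid rather than a generic simplicial polytope. This is a cosmetic issue: a bipyramid over an $n$-gon \emph{is} a simplicial $3$-polytope, and $Q'$ can be taken simplicial since $Q$ is; the hypothesis ``$\partial P \cap \partial Q'$ in at most two vertices'' holds by design, and the existence of a linear subdivision of $|P|\setminus\operatorname{int}|Q'|$ restricting correctly on both boundaries follows because $Q'$ sits strictly inside $P$ (touching only at $v,w$) so one can cone or pull-triangulate the complement. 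The one genuine check is that the thin placement of $Q'$ from Lemma~\ref{lem:projective} can be arranged so that $v$ and $w$ really coincide with the two apices of $P$ (not merely lie on $\partial P$) — this is why we first normalize $Q'$ to have $v,w$ as the strict $x_3$-extrema, then build $P$ \emph{around} it with apices placed exactly at those two points. Once that configuration is set up, items~(1) and~(2) are immediate and (3) is the concatenation of the three collapses above, each justified by Proposition~\ref{pro:2polyt} or Lemma~\ref{lem:planar2}.
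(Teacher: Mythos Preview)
Your overall plan coincides with the paper's: realize $Q'$ via Lemma~\ref{lem:projective} with $v$ highest and $w$ lowest, build the bipyramid $P$ around it so that $Q'\cap\partial P=\{v,w\}$, triangulate $|P|\setminus\operatorname{int}|Q'|$, and derive the collapse from Proposition~\ref{pro:2polyt}. The paper does exactly this (placing $w$ in the plane of $A$ and $v$ above, then citing Proposition~\ref{pro:2polyt} for item~(3) without further detail). So on the level of strategy you are aligned with the paper.

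Where your write-up goes wrong is the final collapse. First, you pick $\sigma\subset A\ast w$ and then claim to collapse onto $A\ast w$; but $A\ast w$ is not a subcomplex of $\partial P-\sigma$, so that sentence is self-contradictory. Second, the assertion ``the upper cone $A\ast v$ with its base $A$ retained collapses to $A$'' is false: a triangulated $2$-disk never collapses onto its boundary circle (the Euler characteristics $1$ and $0$ differ), and the edges $va_i$ you call ``free top-edges'' each lie in \emph{two} triangles of $A\ast v$, so none of them is free. Third, even if you switch to $\sigma\subset v\ast A$ (which is what you actually need), Proposition~\ref{pro:2polyt} gives $C\cup Q'\searrow(\partial P-\sigma)\cup Q'$, and then collapsing the fan $v\ast A-\sigma$ down toward $A$ leaves exactly one edge $va_j$ stranded: both its endpoints $v\in Q'$ and $a_j\in A\ast w$ are blocked, so that edge cannot be removed. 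Thus Proposition~\ref{pro:2polyt}, used as a black box, lands you one edge short of $Q'\cup(A\ast w)$.

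The fix is to run the \emph{proof} of Proposition~\ref{pro:2polyt} rather than its statement. Make $v$ the top vertex for the sweep. Then $\Lk(v,C)$ is a planar annulus bounded by $\Lk(v,\partial P)=A$ and $\Lk(v,\partial Q')$; by Lemma~\ref{lem:planar2} it collapses onto the inner circle $\Lk(v,\partial Q')$. Lifting this collapse removes all of $v\ast A$ (triangles and edges $va_i$) together with the interior faces through $v$, while keeping $\St(v,\partial Q')\subset Q'$. The remaining vertices are then handled exactly as in cases~(ii)--(iv) of Proposition~\ref{pro:2polyt}, collapsing onto $\partial Q'\cup(w\ast A)$. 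This yields $C\cup Q'\searrow Q'\cup(A\ast w)$ cleanly, with no leftover edge. The paper's one-line appeal to Proposition~\ref{pro:2polyt} is meant to be read in this spirit.
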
 

\begin{proof}
Let us embed the $1$-sphere $A$ in the plane $z=0$ of $\mathbb{R}^3$, as the boundary of an $n$-gon around the origin. By Lemma~\ref{lem:projective}, $Q$ is combinatorially equivalent to some $3$-polytope $Q' \subset \mathbb{R}^3$, such that the vertex $v$ (resp. $w$) is identified with the strictly highest (resp. lowest) point of $Q'$. Up to a further translation, we can assume that the lowest point $w'$ of $Q'$ is the origin. Up to wiggling $Q'$ (or $A$) a little, we can also assume that no vertex of $A$ belongs to the affine hull of a facet of $Q'$. Let us call $v'$ the highest point of $Q'$. Let us construct the pyramid $P$ with basis $w' \ast A$ and apex~$v'$. If $A$ is chosen large enough, $Q'$ touches $P$ only at the origin and at the apex $v'$: See Figure \ref{fig:LostResult}.

\begin{figure}[htbf]
  \centering 
  \includegraphics[width=0.3\linewidth]{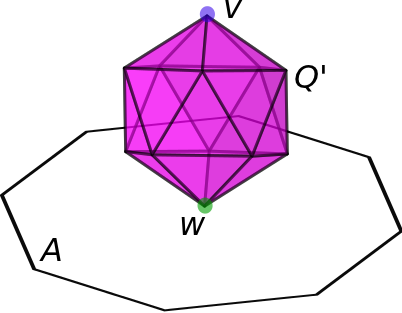} 
\caption{\small Given a polytope $Q'$ (purple) in $\mathbb{R}^3$, call $v$ its top vertex (blue) and $w$ its bottom vertex (green). Take a $1$-sphere $A \subset \mathbb{R}^3$ so that $w$ is coplanar with $A$. If $A$ is chosen `large enough' (larger than in this figure), then the pyramid $|v \ast A \ast w |$ (which is the convex hull of $v \cup A$) contains $Q'$ and admits a triangulation that collapses onto $Q' \cup (A \ast w)$.}
  \label{fig:LostResult}
\end{figure}

Now, any triangulation of the space~$|P| \setminus \operatorname{int} |Q'|$ that satisfies conditions (1) and (2), satisfies also condition (3), because of Corollary \ref{pro:2polyt}. 
One way to triangulate $|P| \setminus \operatorname{int} |Q'|$ so that conditions (1) and (2) are satisfied, is the following. We choose a vertex $p$ of $A$, and attach to $Q'$ the cone with apex $p$ over the faces of $\partial Q'$ seen by $p$. Let $Q''$ be the obtained complex, and let $p'$ be the vertex of $A$ following $p$ in the clockwise order. We attach to $Q''$ the cone with apex $p'$ over the faces of $\partial Q'$ seen by $p'$. And so on, for all vertices of $A$, in clockwise order.
\end{proof}

\begin{proposition} \label{prop:LickorishClaim}
Any composite knot of bridge index $3$ can be realized as $3$-edge subcomplex in a perfect triangulation of a $3$-sphere (or $3$-ball).
\end{proposition}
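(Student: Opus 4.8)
The plan is to exploit the connected–sum structure together with Proposition~\ref{prop:LickMar}. First I would record the arithmetic: a composite knot $K=K_1\# K_2$ of bridge index $3$ forces, by additivity of the bridge index, $b(K_1)=b(K_2)=2$ (each factor must be prime of bridge index two). By Proposition~\ref{prop:LickMar}, each $K_i$ appears as a knotted spanning edge of a \emph{collapsible} $3$-ball $C_i$. Breaking the obvious symmetry, I would arrange — by subdividing one edge, or directly from the Lickorish--Martin construction — that $C_1$ carries a $K_1$-knotted spanning arc made of two edges $[a,m]\cup[m,b]$, with $a,b\in\partial C_1$ and $m$ a new interior vertex.

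Next, the construction of the sphere. The key observation I expect to need is that in Lemma~\ref{lem:collapsepyramid} the simplicial polytope $Q$ may be replaced by any collapsible $3$-ball with two distinguished boundary vertices: the collapse of $P$ onto $Q'\cup(A\ast w)$ is supported entirely in $|P|\setminus\mathring{|Q'|}$ and uses only that $Q'$ is collapsible. Feeding in $C_1$ (distinguished vertices $a,b$) and $C_2$ (distinguished vertices the endpoints of its $K_2$-edge), I obtain collapsible $3$-balls $B_1,B_2$ with $\partial B_1=\partial B_2=\operatorname{susp}(A)$ for one common large polygon $A$, whose two poles are the distinguished vertices, such that the $K_1$-arc of $C_1$ (resp. the $K_2$-edge of $C_2$) runs between the two poles of $B_1$ (resp. $B_2$), and such that $B_i$ collapses onto [its knotted core]\,$\cup$\,[a hemisphere of $\operatorname{susp}(A)$]. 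Now glue $B_1$ to $B_2$ along $\operatorname{susp}(A)$, identifying the poles and choosing the identification of the equators so that the hemisphere collapsed onto inside $B_1$ is matched with the complementary hemisphere for $B_2$. The result $S$ is a triangulated $3$-sphere. In $S$ the two edges $[a,m]\cup[m,b]$ (interior to $B_1$) together with the edge $[b,a]$ of the $K_2$-edge (interior to $B_2$) form a $3$-edge cycle $T$ spanning no $2$-face; the gluing sphere meets $T$ only in $a,b$, so it is a connected-sum sphere, and since one side closes the arc to $K_1$ and the other closes the edge to $K_2$, the cycle $T$ is isotopic to $K_1\# K_2$. Deleting any facet of $S$ — which cannot destroy the three edges of $T$ — gives the $3$-ball version.

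Finally, perfection. Recall that a triangulated $3$-sphere is perfect precisely when some facet can be deleted so that the remaining $3$-ball is collapsible (this is how Corollary~\ref{cor:lickmar} was argued). I would delete a facet $\tau$ in the exterior region of $B_1$ incident to a triangle $\sigma$ of $\operatorname{susp}(A)$ lying in the hemisphere \emph{not} collapsed onto inside $B_1$. Deleting $\tau$ frees $\sigma$, and — as in Corollary~\ref{cor:lickmar} and Proposition~\ref{pro:2polyt} — this triggers a collapse of the exterior region of $B_1$ onto $\partial C_1\cup(\operatorname{susp}(A)-\sigma)$; one then continues by collapsing the interior of the core $C_1$, which is legitimate because $C_1$ is collapsible (Proposition~\ref{prop:LickMar}, using $b(K_1)=2$). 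What should remain is a collapsible $3$-ball containing only the $K_2$-knotted spanning edge, and a second appeal to Proposition~\ref{prop:LickMar} (using $b(K_2)=2$) collapses it to a point; hence $S-\tau\searrow \mathrm{pt}$ and $S$ is perfect.

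I expect the main obstacle to be exactly this last step: threading the two collapses through the shared boundary so that the whole thing terminates. If one naively collapses both exterior regions first, the two cores $C_1,C_2$ are left joined only along the poles $a,b$ — a subcomplex with the homotopy type of a circle (of a wedge of circles, once the leftover boundary disk is counted) — which admits no further collapse. The construction must therefore be arranged, via the suspension/pyramid scaffolding of Lemma~\ref{lem:collapsepyramid} and a careful choice of $\tau$, so that $C_1$ is ``used up'' while it is still attached to the rest along enough of $\partial C_1$, leaving an honest collapsible ball around the single knot $K_2$. In effect one performs the Lickorish--Martin collapse for $K_1$ and then the one for $K_2$ inside a single collapse of $S-\tau$; this two-stage phenomenon is precisely what separates composite bridge-$3$ knots from prime ones, where no such splitting of the collapse is possible.
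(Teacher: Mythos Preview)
You have assembled exactly the right ingredients --- the connected-sum splitting $K=K_1\#K_2$ with $b(K_i)=2$, Proposition~\ref{prop:LickMar} for each factor, and the pyramid scaffolding of Lemma~\ref{lem:collapsepyramid} --- and these are precisely what the paper uses. The gap is in how you assemble them. Your construction is \emph{symmetric}: each $C_i$ is placed inside its own suspension-ball $B_i$, and the two are glued along the full $2$-sphere $\operatorname{susp}(A)$. As you correctly diagnose in your last paragraph, this forces each core to meet the other side only at the two poles $\{a,b\}$. But your proposed remedy --- to ``use up'' $C_1$ while it is still attached along enough of $\partial C_1$ --- cannot work as stated: by your own construction $C_1\cap\partial B_1=\{a,b\}$, so $C_1$ meets the $B_2$-side \emph{only} at those two vertices, and no choice of $\tau$ changes this. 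Once the exterior shell of $B_1$ is gone, any further collapse of $C_1$ must leave at least a $1$-complex spanning $a$ and $b$, producing exactly the spurious circle you anticipated. Note also that the Lemma~\ref{lem:collapsepyramid} collapse $B_1\searrow C_1\cup(\text{hemisphere})$ removes $2$-faces of the \emph{opposite} hemisphere of $\operatorname{susp}(A)$, and in $S$ these are not free since they lie on $\partial B_2$; so that collapse cannot even be initiated inside $S$.

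The paper avoids the two-pole problem by an \emph{asymmetric} construction: only $B_2$ is inserted via Lemma~\ref{lem:collapsepyramid}, while $B_1$ is used directly. The ambient sphere is $S=\partial(B_1\ast y_2)$, and the pyramid $\tilde P$ replaces only the subcomplex $y_1\ast A\ast y_2$ (with $A=\Lk(y_1,\partial B_1)$) of the cone part $\partial B_1\ast y_2$. The point is that the bottom hemisphere $y_1\ast A$ of $\tilde P$ is literally $\St(y_1,\partial B_1)\subset B_1$. One first collapses the cone part minus a facet onto $\tilde P\cup B_1$ (via $\partial B_1-\sigma\searrow y_1\ast A$), and then applies the Lemma~\ref{lem:collapsepyramid} collapse $\tilde P\searrow B_2\cup(y_1\ast A)$; since $y_1\ast A\subset B_1$ already, this terminates at $B_1\cup B_2$ with $B_1\cap B_2=\{y_1\}$ --- a \emph{one-point} wedge of collapsible balls, hence collapsible. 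The three edges realizing $K$ are $[x_1,y_1]$ (the $K_1$-edge in $B_1$), $[y_1,y_2]$ (the $K_2$-edge threaded through $B_2\subset\tilde P$), and the straight cone edge $[y_2,x_1]$; no subdivision of either knotted arc into two edges is needed.
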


\begin{proof}
Being composite, $K$ is the connected sum of two knots $K_1$ and $K_2$ of bridge index~$2$. By Proposition~\ref{prop:LickMar}, we can construct two collapsible $3$-balls $B_1$ and $B_2$, so that each $B_i$ contains a knotted spanning edge $[x_i, y_i]$, the knot being isotopic to $K_i$. 

Let $A:= \Lk (y_1, \partial B_1 )$. 
Let $P$ be the pyramid $|y_1 \ast A \ast y_2 |$. Since all $2$-spheres are polytopal, there is a simplicial $3$-polytope $Q$ with $\partial Q = \partial B_2$. Applying Lemma \ref{lem:collapsepyramid}, and later replacing the  triangulation of~$Q$ with $B_2$, we obtain that there is a new triangulation $\tilde{P}$ of $P$ such that
\begin{compactenum}[(1)]
\item $\tilde{P}$ contains a copy of $B_2$, so that the $K_2$-knotted spanning edge of $B_2$ goes from $y_1$ to $y_2$;
\item $\tilde{P}$ collapses onto $B_2 \cup (y_1 \ast A)$; 
\item on the boundary of $P$, the two triangulations $\tilde{P}$ and $y_1 \ast A \ast y_2$ coincide. 
\end{compactenum}

By Lemma~\ref{lem:planar2}, any $2$-ball collapses onto the closed star of any of its vertices. In particular, if $\sigma$ is a facet of $\partial B_1$, the $2$-ball $\partial B_1 - \sigma$ collapses onto $y_1 \ast A$. This implies that $(\partial B_1 \ast y_2) - (\sigma \ast y_2)$ collapses onto $y_1 \ast A \ast y_2$. By attaching $B_1$ and observing that this attachment does not interfere with the collapse, we obtain 
\[S  - (\sigma \ast y_2) \ \searrow \ (y_1 \ast A \ast y_2) \; \cup \; B_1,\]
where $S := \partial (B_1 \ast y_2)$. The collapse above does not depend on how the interior of $y_1 \ast A \ast y_2$ is triangulated. Since $\tilde{P}$ is an ``alternative'' triangulation of the interior of $y_1 \ast A \ast y_2$, but it coincides with it on the boundary, we get
\[\tilde{S}  - (\sigma \ast y_2) \ \searrow \ \tilde{P} \; \cup B_1,\]
where $\tilde{S}  := B_1  \, \cup \,  \tilde{P} \, \cup \, (\textrm{del} (x, \partial B_1) \ast y_2 )$ is the triangulation of $S^3$ ``alternative'' to $S$, and obtained by re-triangulating $|y_1 \ast A \ast y_2|$ according to $\tilde{P}$. But by construction $\tilde{P}$ collapses onto $B_2 \cup (y_1 \ast A)$, and $y_1 \ast A$ is a subcomplex of $B_1$. Therefore,
\[\tilde{S}  - (\sigma \ast y_2) \ \searrow \ B_2 \, \cup_{y_1} B_1,\]
where $\cup_{y_1}$ denotes the wedge at $y_1$. 
Since $B_1$ and $B_2$ are collapsible, their wedge is also collapsible. In other words, the removal of the facet $\sigma \ast y_2$ from $\tilde{S}$ yields a collapsible $3$-ball. In this ball, the knotted spanning edges are concatenated, so the knots ``sum up''. But by construction, the connected sum of the knots $K_1$ and $K_2$ is the knot $K$ we started with. \end{proof}

\begin{lemma} \label{lem:glueNE}
Let $W$ be the wedge of two non-evasive $3$-balls. There is a non-evasive $3$-ball $B$ that contains $W$ and can be reduced to $W$ by deleting a single boundary vertex of $B$.
\end{lemma}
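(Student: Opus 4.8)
The plan is to build $B$ as a minimal ``thickening'' of $W$ around its only singular point. Write $W=B_1\cup_p B_2$, where $B_1,B_2$ are non-evasive $3$-balls, $p\in\partial B_1\cap\partial B_2$ is a vertex, and $B_1\cap B_2=\{p\}$. Every vertex of $W$ other than $p$ already has a $2$-ball or a $2$-sphere as link, whereas $\Lk(p,W)=L_1\sqcup L_2$ with $L_i:=\Lk(p,B_i)$ a $2$-ball; thus $p$ is the only point at which $W$ fails to be a manifold, and it is there that the thickening must act.

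First I would show that $W$ itself is non-evasive. The key is the auxiliary fact that a non-evasive complex can be reduced --- by a sequence of vertex deletions, each of which has non-evasive link --- onto any prescribed one of its vertices; I would prove this by induction on the number of vertices, the inductive step being the claim that a non-evasive complex on $\ge 2$ vertices always has some vertex, other than the prescribed target, whose link and deletion are both non-evasive. Applying this statement to $B_1$ with target $p$, while leaving $B_2$ untouched throughout (which is legal, since $\Lk(v,W)=\Lk(v,B_1)$ for every $v\in B_1\setminus\{p\}$), reduces $W$ first to $\{p\}\cup_p B_2=B_2$ and then, $B_2$ being non-evasive, to a point.

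Next I would construct $B$. After a harmless subdivision near $p$ I may assume each star $\St(p,B_i)$ is a cone $p\ast L_i$ with a chosen boundary arc $\gamma_i\subset\partial L_i$ singled out. I then attach to $W$ a small collapsible $3$-ball ``bridge'', glued to $p\ast\gamma_1$ on one side and to $p\ast\gamma_2$ on the other and triangulated so that: (i) in the new link of $p$ this bridge contributes a $2$-ball meeting $L_1$ along $\gamma_1$ and $L_2$ along $\gamma_2$, which is exactly what fuses $L_1\sqcup L_2$ into a single $2$-ball and so resolves the singularity; and (ii) all the extra material hangs off one distinguished boundary vertex $M$, with $\Lk(M,B)$ a $2$-ball and $B$ brought back to $W$ by the single step of deleting $M$. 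A routine local check then shows that $\Lk(q,B)$ is a $2$-ball or a $2$-sphere for every vertex $q$, so $B$ is a PL $3$-ball, and $W\subseteq B$ by construction.

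Finally, $\Lk(M,B)$ is a $2$-ball and hence non-evasive by Lemma~\ref{lem:planar}; so deleting $M$ is a legitimate non-evasiveness step, and it reduces $B$ to the non-evasive complex $W$ --- whence $B$ is non-evasive. The step I expect to be the main obstacle is the construction of the bridge: one has to triangulate the neighbourhood of $p$ so that $B$ is at once a genuine $3$-ball, contains $W$ as a subcomplex, has non-evasive vertex link at $M$, and is returned to precisely $W$ upon removing the one vertex $M$ --- that is, so that resolving the wedge singularity at $p$ costs only ``collapsible'' extra material attached to a single vertex. The non-evasiveness of $W$ and the concluding deduction are, by comparison, routine.
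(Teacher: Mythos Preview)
Your plan coincides with the paper's: resolve the non-manifold point of $W$ by gluing on a small $3$-cell at $p$, arranged so that one distinguished new vertex carries the added material. The paper makes this explicit by taking the wedge vertex $x$ to lie in boundary triangles $\{x,a_i,b_i\}\subset\partial B_i$ and attaching the square pyramid with apex $x$ and base the quadrilateral $a_1\,b_1\,a_2\,b_2$, stellarly subdivided at a new vertex $v$ in the barycentre of the base; then $\Lk(v,B)$ is the cone from $x$ over a $4$-cycle, hence a non-evasive $2$-ball.

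There is, however, a genuine obstruction to your list of requirements for the bridge. You ask simultaneously that $B$ be a $3$-ball and that $B-M=W$ exactly. But $B-M=W$ forces every face of $B$ outside $W$ to contain $M$, whence
\[
\Lk(p,B)\;=\;L_1\,\sqcup\,L_2\;\cup\;\bigl(M\ast T\bigr),\qquad T\subset L_1\sqcup L_2,
\]
and here $M$ is a cut vertex separating the $L_1$-side from the $L_2$-side. Since neither a $2$-ball nor a $2$-sphere has a cut vertex, $\Lk(p,B)$ is not a manifold link and $B$ is not a $3$-ball. A bridge all of whose new faces pass through $M$ can join $L_1$ to $L_2$ only \emph{through} $M$, which does not desingularise~$p$. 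So item~(ii) of your desiderata (``all the extra material hangs off $M$, with $B-M=W$'') is incompatible with $B$ being a ball, and the ``routine local check'' you announce would in fact fail.

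The paper's pyramid escapes this precisely because the two base edges $\{b_1,a_2\}$ and $\{a_1,b_2\}$ connect the two link-discs directly without meeting $v$. Correspondingly $B-v$ is not literally $W$ but $W$ together with the two lateral triangles $\{x,b_1,a_2\}$ and $\{x,a_1,b_2\}$; the paper's sentence ``the deletion of $v$ from $B$ is $W$'' is thus a slight overstatement, though the enlarged complex is still non-evasive and the conclusion survives. Your proposal, having no concrete construction to fall back on once the impossible constraint $B-M=W$ is relaxed, does not yet amount to a proof. The missing idea is that any bridge resolving the singularity at $p$ must contain at least one new face joining the two link-discs directly, bypassing the new vertex.
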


\begin{proof}
Let $B_1$ and $B_2$ be two non-evasive $3$-balls. Choose triangles $\{a_i, b_i, x_i\}$ in the boundary of $\partial B_i$ ($i=1,2$), and identify $x_1$ with $x_2$. 
The resulting $3$-complex $W$ is a wedge of non-evasive $3$-balls, hence non-evasive. To obtain the requested $3$-ball, attach onto $W$ a square pyramid with apex $x_{1} \equiv x_2$ and basis $\{a_1, a_2, b_1, b_2\}$; then, subdivide the pyramid stellarly, by inserting a vertex $v$ in the barycenter of the quadrilateral $\{a_1, a_2, b_1, b_2\}$. Let $B$ be the resulting triangulation. By construction, $\Lk(v,B)$ is non-evasive. Since the deletion of $v$ from $B$ is $W$, which is non-evasive, we conclude that $B$ is non-evasive.
\end{proof}

\begin{theorem} \label{thm:PiTightNonCollapsible}
If $K$ is a composite knot of bridge index $3$, or any knot of bridge index $\le 2$, there exists a triangulated $3$-ball $B = B(K)$ with the following properties:
\begin{compactenum}[\rm (i)]
\item $B$ is non-evasive;
\item $B$ contains a $6$-edge subcomplex isotopic to $K$;
\item $B$ does not admit {\em any} linear embedding in $\mathbb{R}^3$.
\end{compactenum}
\end{theorem}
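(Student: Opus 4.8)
The plan is to combine the knot-theoretic constructions of this section with the non-evasiveness machinery of Section~2, and then to rule out linear embeddings by a stick-number count.

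\emph{Constructing $B(K)$ (items (i) and (ii)).} Write $K=K_1\#K_2$ with $K_1,K_2$ of bridge index $\le 2$: if $K$ already has bridge index $\le 2$ take $K_2$ trivial, and if $K$ is composite of bridge index $3$ use its decomposition into two factors of bridge index $2$. By Proposition~\ref{prop:LickMar} (and trivially, using Corollary~\ref{cor:chillingworth}, for the unknot) there are collapsible $3$-balls $D_1,D_2$, each $D_i$ carrying a $K_i$-knotted spanning edge $e_i=[x_i,y_i]$. By Welker's theorem~\cite{Welker}, each barycentric subdivision $\sd D_i$ is a \emph{non-evasive} $3$-ball, in which $e_i$ has become a $2$-edge $K_i$-knotted spanning arc. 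Wedge $\sd D_1$ and $\sd D_2$, identifying $y_1$ with $x_2$, so that the two arcs concatenate into a $4$-edge $K$-knotted spanning arc $P$ (from $x_1$ to $y_2$); being a wedge of non-evasive balls the result $W$ is non-evasive, and Lemma~\ref{lem:glueNE} replaces it with a non-evasive $3$-ball $B'\supseteq W$ still carrying $P$, now with $\partial B'$ a genuine $2$-sphere on which both endpoints of $P$ lie (choose the auxiliary triangles of that lemma to avoid $x_1$ and $y_2$). Finally attach one new vertex $u$, coning it over a boundary disc $\Delta\subset\partial B'$ containing both endpoints of $P$: the cone $u\ast\Delta$ is a $3$-ball, $\Lk(u,\,B'\cup(u\ast\Delta))=\Delta$ is a $2$-ball and hence non-evasive (Lemma~\ref{lem:planar}), and deleting $u$ returns $B'$, so $B:=B'\cup(u\ast\Delta)=B(K)$ is a non-evasive $3$-ball. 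Since $[y_2,u]\cup[u,x_1]$ is boundary-parallel and $P$ is $K$-knotted, the $6$-edge loop $L:=P\cup[y_2,u]\cup[u,x_1]\subset B$ is isotopic to $K$. This proves (i) and (ii).

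\emph{Ruling out linear embeddings (item (iii)).} If $B(K)$ had a linear embedding in $\mathbb{R}^3$, then $L$ would be a polygonal representative of $K$ using only $6$ segments, so $K$ would have stick number $\le 6$; by Calvo's bound recalled above~\cite{Calvo}, the only such knots are the unknot and the two trefoils. But every composite knot has stick number $\ge 7$ (stick number $\le 6$ forces primeness), and every knot of bridge index $\le 2$ other than the unknot and the trefoils has stick number $\ge 7$. Hence $B(K)$ admits no linear embedding whenever $K$ is composite of bridge index $3$, or a knot of bridge index $\le 2$ different from the unknot and the two trefoils; this settles (iii) for all of those $K$.

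\emph{The borderline cases.} For the unknot and the trefoils I would bootstrap from the connected sum $T_2$ of two trefoils (composite, bridge index $3$, stick number $2\cdot2+4=8$): the ball $B(T_2)$ built above is non-evasive and, by the previous paragraph with $8>6$, admits no linear embedding. For $K$ the unknot one may simply take $B(K):=B(T_2)$, which clearly contains a $6$-edge unknotted loop (any $6$-cycle in the $2$-sphere $\partial B(T_2)$ bounds a disc). For $K$ a trefoil, $B(T_2)$ contains a $3_1$-knotted arc coming from the subdivided trefoil factor; enlarging $B(T_2)$ by a short chain of new boundary vertices — each introduced as a cone over a boundary disc, so that non-evasiveness is preserved at every step — yields a non-evasive $3$-ball that still contains $B(T_2)$, hence still admits no linear embedding, and in which that arc now closes up to a $6$-edge loop isotopic to $3_1$.

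\emph{Main obstacle.} The whole difficulty is concentrated in (iii): the only route to non-evasiveness available is barycentric subdivision via Welker's theorem, and subdivision inflates the edge-length of the knotted object — the $1$-edge spanning edges of Proposition~\ref{prop:LickMar} grow to $2$ and then, after the wedge and the cone, to $6$ (equivalently, the $3$-edge loops of Corollaries~\ref{cor:bing}--\ref{cor:lickmar} double under $\sd$) — while the obstruction needs the loop to have \emph{strictly fewer} edges than $\mathrm{stick}(K)$. The hypotheses are exactly calibrated so that this works with room to spare for composite knots and for bridge-index-$\le 2$ knots of stick number $\ge 7$. The trefoil, sitting precisely at stick number $6$, is the genuine borderline case, and the one place demanding real care is the bootstrapping just described: one must verify that the auxiliary boundary vertices can be attached so that the enlarged complex stays a non-evasive $3$-ball, cannot acquire a linear embedding, and genuinely contains a $6$-edge subcomplex isotopic to $K$ (in particular that the closing arc remains boundary-parallel, so that the new loop has the correct knot type).
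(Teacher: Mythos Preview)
Your approach is genuinely different from the paper's, and for the generic case (stick number $>6$) it is an attractive shortcut. The paper does not build the $6$-edge knot by wedging two subdivided Lickorish--Martin balls and coning; instead it first invokes Proposition~\ref{prop:LickorishClaim} (or Corollary~\ref{cor:lickmar}) to obtain a \emph{collapsible} ball $B_0$ already containing $K$ as a $3$-edge knot, then sets $B_1(K):=\sd B_0$ and appeals to Welker. Your route bypasses the nested-polytopes machinery of Proposition~\ref{pro:2polyt} and Lemma~\ref{lem:collapsepyramid} entirely, at the cost of having to verify by hand that (a) after Lemma~\ref{lem:glueNE} the concatenated arc $P$ really is a $K_1\#K_2$-knotted spanning arc of the thickened ball $B'$, and (b) the cone $u\ast\Delta$ closes it to the correct knot type. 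Both are plausible, but (a) is not automatic: Lemma~\ref{lem:glueNE} was designed only to preserve non-evasiveness, and the location of the wedge point relative to $\partial B'$ after the pyramid is attached needs to be traced carefully before you can say ``any boundary path completes $P$ to $K$''.

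The borderline cases are where the paper's argument is cleaner and yours has a real gap. For $K$ with stick number $\le 6$ the paper simply takes the already-constructed $B_1(K)$ (non-evasive, with a $6$-edge $K$) and wedges it, via Lemma~\ref{lem:glueNE}, with $B_1(K')$ for some fixed $K'$ of stick number $>6$. The resulting ball contains a $6$-edge copy of $K$ \emph{and} a $6$-edge copy of $K'$; the latter kills any linear embedding, the former gives (ii), and non-evasiveness is immediate. Your bootstrap for the trefoil instead tries to locate a short trefoil-knotted arc inside $B(T_2)$ and then ``close it up'' with auxiliary cone vertices; as you yourself flag, this requires controlling where the endpoints of that arc sit on $\partial B(T_2)$ after the earlier wedging/thickening/coning, and ensuring the closing arc is boundary-parallel. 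That control is not provided. The paper's trick of wedging with a second ball $B_1(K')$ avoids the problem entirely and would slot directly into your framework: once you have $B_1(K)$ (from your own construction) you can wedge it with $B_1(T_2)$ and thicken, rather than trying to surgically modify $B(T_2)$.
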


\begin{proof}
Some perfect triangulation $S_0$ of the $3$-sphere contains $K$ as $3$-edge subcomplex: This follows from Proposition~\ref{prop:LickorishClaim} in case $K$ is a composite knot of bridge index $3$, and from Corollary~\ref{cor:lickmar} if $K$ has bridge index~$\le 2$. Let $B_0$ be the $3$-ball $S_0 - \Delta$. This $B_0$ is collapsible, yet it contains a $3$-edge knot isotopic to $K$. Since the barycentric subdivision of any collapsible complex is non-evasive, $B_1 = B_1(K) := \sd B_0$ is a non-evasive $3$-ball with a $6$-edge knot isotopic to $K$ in its $1$-skeleton. If $B_1 (K)$ has a linear embedding in $\mathbb{R}^3$, then its subcomplex $K$ has also a linear embedding in $\mathbb{R}^3$, as a closed path consisting of $6$ straight edges. Now:
\begin{compactitem}[ --- ]
\item if $K$ has stick number $>6$, set $B:= B_1 (K)$;
\item if $K$ has stick number $\le 6$, choose a knot $K'$ of stick number $>6$, form the ball $B_1(K')$ as above, and then thicken the wedge of $B_1(K)$ and $B_1(K')$ to a non-evasive $3$-ball~$B$, as in Lemma~\ref{lem:glueNE}.
\end{compactitem}
In both cases, $B$ contains at least one knot of stick number $>6$ as $6$-edge subcomplex, so it cannot have a linear embedding in $\R^3$. 
\end{proof}

\begin{Cor} \label{cor:NEnotConvex}
For each $d \ge 3$, some non-evasive $d$-balls are not convex.
\end{Cor}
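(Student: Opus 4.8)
The plan is to bootstrap from Theorem~\ref{thm:PiTightNonCollapsible}. For $d=3$ there is nothing to prove: the ball $B=B(K)$ produced there is non-evasive, and it admits \emph{no} linear embedding in $\R^3$ whatsoever, so a fortiori it is not convex. For $d>3$ I would take the $(d-3)$-fold cone
\[B_d\ :=\ v_{d-3}\ast\cdots\ast v_1\ast B(K),\]
which is a $d$-ball. That $B_d$ is non-evasive is routine: the cone $v\ast X$ over \emph{any} finite complex $X$ is non-evasive — induct on $|V(X)|$, deleting a vertex $w$ of $X$ and using that $\Lk(w,v\ast X)=v\ast\Lk(w,X)$ and $(v\ast X)-w=v\ast(X-w)$ are cones over complexes with fewer vertices — so coning $B(K)$ up $d-3$ times preserves non-evasiveness.

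The real content is that $B_d$ is not convex. Suppose it had a convex linear realisation $|B_d|\subset\R^d$, a full-dimensional convex body. Each cone apex $v_i$ is a \emph{boundary} vertex of $B_d$, since its link is an iterated cone over a ball, hence a ball and not a sphere; in particular $v_1^{\ast}\in\partial|B_d|$, so there is a supporting hyperplane at $v_1^{\ast}$. When that hyperplane can be chosen to meet $|B_d|$ only in $v_1^{\ast}$ — automatic, for instance, if $v_1^{\ast}$ is an extreme point of the body — a hyperplane $h$ placed just inside it cuts $|B_d|$ in a set that is convex (hyperplane meets convex body) and combinatorially equal to $\Lk(v_1,B_d)=v_2\ast\cdots\ast v_{d-3}\ast B(K)$. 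Repeating the cut near $v_2,\dots,v_{d-3}$ in succession, one drops down to a convex — hence linear — realisation of $B(K)$ in a $3$-flat, contradicting Theorem~\ref{thm:PiTightNonCollapsible}.

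I expect the only genuine obstacle to be the degenerate situation, occurring at some step of the iteration, in which the supporting hyperplane at the relevant cone apex is forced to contain a positive-dimensional face of the convex body, so that the cross-section need not isolate the link. The cleanest remedy is to first strip off top vertices in a generic direction $\pi$, exactly as in Lemma~\ref{lem:acyclic}: a convex body is $\pi$-tight for every $\pi$, and $\pi$-tightness survives each deletion by Lemma~\ref{lem:acyclic}(1), so one may keep deleting the $\pi$-maximal vertex until the cone apex finally becomes the unique $\pi$-maximum. This replaces $B(K)$ by a sub-ball $B(K)'$ obtained by removing certain topmost vertices, and the point that needs care is that $B(K)'$ must still contain a closed subcomplex whose stick number exceeds $3$. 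This last fact I would deduce from the construction of $B(K)$: the knot sits inside a solid triangulated tube drilled through the ball (Proposition~\ref{prop:Bing}), and deleting extremal vertices from the ball cannot unknot that tube, so a knot of stick number $>3$ survives in $B(K)'$, again contradicting Theorem~\ref{thm:PiTightNonCollapsible}. Carrying out this verification rigorously — tracking the tube through repeated top-vertex deletions — is the one place where I expect real work.
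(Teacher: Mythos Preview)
For $d=3$ your argument matches the paper's. For $d\ge 4$, however, the paper does \emph{not} cone up the three-dimensional example. Instead it produces a shellable $d$-ball $B$ whose boundary $(d-1)$-sphere has a non-shellable barycentric subdivision (this sphere is built by suspending a $3$-sphere containing a sufficiently complicated $3$-edge knot, and invoking Pachner's theorem that every PL sphere bounds a shellable ball). Then $\sd B$ is non-evasive because $B$ is collapsible, but $\sd B$ cannot be convex because $\partial(\sd B)=\sd(\partial B)$ is not shellable, while the boundary of any convex polytope is. The obstruction used is thus \emph{shellability of the boundary}, not the stick-number obstruction.

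Your coning approach, by contrast, has a real gap that your own last paragraph does not close. In a hypothetical convex realization of $B_d=v_{d-3}\ast\cdots\ast v_1\ast B(K)$, the apex $v_1$ is a boundary vertex of the triangulation, but there is no reason whatsoever for it to be an \emph{extreme point} of the convex body $|B_d|$: a linear convex triangulation may place simplicial vertices in the relative interior of faces of the polytope. If $v_1$ is not extreme, no hyperplane separates it from the remaining vertices and the ``slice off the link'' step is unavailable. Your proposed remedy --- deleting $\pi$-top vertices until $v_1$ is maximal --- does preserve $\pi$-tightness (Lemma~\ref{lem:acyclic}), but it destroys convexity, so after one slice you land in $\R^{d-1}$ with a merely \emph{linear} (not convex, not visibly tight in any new direction) embedding of some subcomplex of $v_2\ast\cdots\ast B(K)$, and you cannot iterate. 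Moreover, the vertices you had to delete to expose $v_1$ may well include the other apices $v_i$ and vertices of the knot itself; your final appeal to ``the tube survives'' mixes up dimensions (you are still in $\R^{d-1}$, not $\R^3$) and the stick-number threshold (it is $6$, not $3$). The paper's boundary-shellability obstruction sidesteps all of this.
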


\begin{proof}
For $d=3$, the claim follows from Theorem~\ref{thm:PiTightNonCollapsible} together with the fact that all convex $3$-balls can be linearly embedded in $\mathbb{R}^3$.

For $d \ge 4$, let us start by exhibiting a $(d-1)$-sphere $S$ whose barycentric subdivision is not shellable. By Proposition~\ref{prop:Bing}, some $3$-sphere $S_0$ has a $3$-edge subcomplex $K_0$ isotopic to a connected sum of $3 \cdot 2^{d-4} \cdot (d-2)!$ trefoil knots. The $(d-4)$-th suspension $S$ of $S_0$ is a PL $(d-1)$-sphere, because suspensions preserve the PL property and all $3$-spheres are PL. Inside $S$, the $(d-4)$-th suspension $K$ of $K_0$ has exactly $3 \cdot 2^{d-4}$ facets. So, inside $\sd S$, $\sd K$ has $3 \cdot 2^{d-4} \cdot (d-2)!$ facets. Moreover, the space $|\sd S| - |\sd K| = |S| - |K|$ retracts to $|S_0| - |K_0|$. Reasoning as in~\cite[Corollary~2.21]{BZ}, one can prove that $\sd S$ is (not LC and therefore) not shellable.

Now, Pachner showed that every PL $(d-1)$-sphere is combinatorially equivalent to the boundary of some shellable $d$-ball~\cite[Theorem~2, p.~79]{Pachner0}. In particular, there is a shellable $d$-ball $B$ whose boundary is our $(d-1)$-sphere $S$. But then the boundary of $\sd B$ is $\sd S$, which is not shellable. Since the boundary of every convex polytope is shellable, $\sd B$ cannot admit convex embeddings. Since $B$ is shellable, and hence collapsible, $\sd B$ is non-evasive.
\end{proof}

\begin{rem}
For each $d \ge 4$ and for any non-negative integer $k$, one can even construct vertex decomposable $d$-balls whose $k$-th barycentric subdivision is not convex. To see this, 
use the same proof of Corollary~\ref{cor:NEnotConvex}, starting with a $3$-sphere $S_0$ with a $3$-edge knot $K$ isotopic to a connected sum of $3 \cdot 2^{d-4} \cdot \left(\, (d-2)! \,\right)^k$ trefoil knots. 
\end{rem}

In conclusion, knotted subcomplexes of $3$-balls yield obstructions both to collapsibility and embeddability in $\R^3$. While the first obstructions directly depends on the \emph{bridge index} of the knot, the second obstruction is related to the \emph{stick number} of the knot. The existence of knots with high stick number and low bridge index makes it possible to construct $3$-balls that are collapsible (and even non-evasive or shellable), but cannot be linearly embedded in $\R^3$.

\section{Higher dimensions and open problems} \label{sec:HighDim}
In dimension higher than $3$, we do not know much. First of all, let us show that Theorem~\ref{thm:tomo3ball} does not generalize to higher dimensions: In fact, a ball that is  $\pi$-tight just with respect to \emph{some} directions $\pi$, need not be collapsible. 

\begin{theorem}\label{thm:tomo4ball} 
Let $\pi$ be an arbitrary vector in $\mathbb{R}^4$.
\begin{compactenum}[\rm (1)]
\item For each $m \in \mathbb{N}$, there is a $\pi$-tight $4$-ball whose $m$-th barycentric subdivision is not collapsible.
\item There is a $\pi$-tight contractible $3$-complex in $\mathbb{R}^4$ any subdivision of which is not collapsible.
\end{compactenum}
\end{theorem}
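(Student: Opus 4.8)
\textbf{Plan for Theorem~\ref{thm:tomo4ball}.}
The strategy is to exploit knotted subcomplexes as obstructions, exactly as in Section~\ref{sec:knots}, but now inside a $4$-dimensional ambient complex where $\pi$-tightness with respect to a \emph{single} direction is cheap to arrange. For part (2), I would start with a $3$-sphere $S_0$ containing a $3$-edge knot $K_0$ isotopic to a connected sum of sufficiently many trefoils — enough trefoils that, by Goodrick's argument~\cite{GOO}, after $m$ barycentric subdivisions the complement of the subdivided knot still obstructs collapsibility (the number of trefoils has to absorb the factor $(d-2)!$ per subdivision, as in Corollary~\ref{cor:NEnotConvex} and the Remark following it). Now embed a ``cone-like'' $3$-complex $C$ in $\R^4$ whose underlying space is $|v \ast S_0|$ minus a tiny open tube drilled along $v \ast K_0$ from the apex $v$ down to a point — i.e., a $4$-dimensional Furch--Bing construction in the spirit of Proposition~\ref{prop:Bing}, but arranged to be $\pi$-tight. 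The point is that $C$ is contractible (it deformation retracts to a spanning arc complement, which is simply connected since we are in a ball), yet any subdivision of $C$ fails to collapse because it still contains the knotted spanning arc of $K_0$, whose bridge index is too large.

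The key device for $\pi$-tightness is the following: place the whole construction so that the projection $\langle \pi, - \rangle$ is \emph{monotone along the drilled tube}, with $v$ as the unique top vertex. Then for every hyperplane $h \perp \pi$, the slice $h^+ \cap |C|$ is the complement (in a sub-ball) of a connected sub-arc of the tube, hence contractible; so the inclusion $h^+ \cap |C| \hookrightarrow |C|$ induces the zero map in reduced homology, which is certainly injective. Thus $C$ is $\pi$-tight for this particular $\pi$. Since the statement allows $\pi$ to be an arbitrary \emph{fixed} vector, we simply rotate the construction so that $\pi$ points ``upward'' relative to the tube; no genericity of $\pi$ is needed because we only claim tightness for one direction, not for almost all. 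For part (1), replace the contractible $3$-complex by an honest $4$-ball: take a finely triangulated $4$-ball in $\R^4$, drill the knotted tube along $v \ast K_0$ stopping one $4$-cell short of perforating (the ``$d$-dimensional Furch ball''), and arrange the same $\pi$-monotonicity so that each slice $h^+ \cap |B|$ is the complement of a sub-tube in a $4$-ball, hence contractible. Then $B$ is $\pi$-tight, is a genuine $4$-ball, and its $m$-th barycentric subdivision contains $\sd^m$ of the knotted spanning arc, which by the trefoil-count obstruction cannot be collapsible.

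I would organize the write-up as: (a) recall the $d$-dimensional drilling/Furch construction and note that the resulting $4$-ball (resp.\ contractible $3$-complex) contains a spanning arc knotted as a connected sum of $N$ trefoils, for any prescribed $N$; (b) check $\pi$-tightness by the slice-is-contractible argument above, using that the drilled tube is $\langle \pi,-\rangle$-monotone; (c) invoke the knot-theoretic obstruction — a collapsible triangulated $d$-ball cannot contain a spanning arc knotted as a sum of $\ge 2$ trefoils, and this obstruction survives $m$ subdivisions provided $N \ge 2 \cdot ((d-2)!)^{\,m}$ or so, exactly as in the Remark after Corollary~\ref{cor:NEnotConvex} — to conclude non-collapsibility of $\sd^m B$, resp.\ of every subdivision of $C$. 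The main obstacle I anticipate is step (b): one must triangulate the drilled ball so that the $\langle\pi,-\rangle$-monotonicity of the tube is \emph{combinatorially} reflected — i.e.\ so that every hyperplane slice really does cut the tube in a single connected piece and the slice of the whole ball stays contractible — which requires some care in choosing the triangulation of the tube and its complement (for instance, triangulating the tube as a stacked sequence of prisms adapted to the $\pi$-levels, and keeping vertices in general position with respect to $\pi$ only \emph{after} the tube is fixed). A secondary subtlety is making part (2)'s complex genuinely $3$-dimensional while still contractible and still carrying the knot; here I would drill all the way through a $4$-ball and then take the $3$-skeleton-like ``wall'' around the tube, i.e.\ the boundary-of-tube complex together with a Seifert-surface-type spanning disc, checking that what remains is a contractible $3$-complex whose subdivisions inherit the knotted arc.
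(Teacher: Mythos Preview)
Your plan has a genuine gap in part~(2): the knot-theoretic obstruction you invoke does \emph{not} survive arbitrary subdivisions. Goodrick's argument, and the refinements in Corollary~\ref{cor:NEnotConvex} and its Remark, say only that a ball containing a spanning arc knotted as a sum of $N$ trefoils fails to be collapsible provided the arc has few enough edges relative to $N$. Each subdivision lengthens the arc, so for any fixed $N$ the obstruction evaporates after sufficiently many subdivisions. Indeed, the paper notes (in the Remark following Theorem~\ref{thm:tomo4ball}) that for every PL $4$-ball $B$ there is an $m$ with $\sd^m(B\times I)$ collapsible --- so no knotted-ball construction can give a complex whose \emph{every} subdivision is non-collapsible. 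Your drilled $3$-complex in $\R^4$ therefore cannot establish~(2) as stated.

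The paper's proof sidesteps this entirely. For part~(2) it takes a contractible $2$-complex $C$ with \emph{no free face} (Bing's house, or the $8$-vertex dunce hat) realized in $\R^3$, and forms its suspension with apices $(0,0,0,\pm 1)$. Having no free face is preserved under suspension and under \emph{all} subdivisions, so every subdivision of $\operatorname{susp}(C)$ is automatically non-collapsible; and $\pi$-tightness for $\pi=(0,0,0,1)$ is immediate because every upper halfspace meets the suspension in a cone. For part~(1) the paper simply takes $B\times I$ where $B\subset\R^3$ is a Furch--Bing $3$-ball with a sufficiently knotted spanning edge (Proposition~\ref{prop:Bing}); the product is trivially $\pi$-tight for $\pi=(0,0,0,1)$ since each slice $h^+\cap|B\times I|$ is $|B|\times[t,1]$, and \cite[Theorem~3.12]{KarimBrunoZeeman} supplies the non-collapsibility of $\sd^m(B\times I)$. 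Both constructions avoid the delicate ``$\pi$-monotone tube'' triangulation you flagged as the main obstacle, and neither requires a $4$-dimensional analogue of Goodrick's theorem.
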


\begin{proof}
Up to rotating the coordinate axes, we can assume that $\pi$ is the vector $(0,0,0,1)$. 

By \cite[Theorem~3.12]{KarimBrunoZeeman}, for any $m \in \mathbb{N}$ there is a $3$-ball $B$ such that $\sd^m (B \times I)$ is not collapsible. The proof of \cite[Theorem~3.12]{KarimBrunoZeeman} runs as follows: For {any} $3$-ball $B$ that contains a knotted spanning edge, if the knot is the sum of sufficiently many trefoil knots, then $\sd^m (B \times I)$ is not collapsible. By Proposition~\ref{prop:Bing}, some $3$-balls $B$ of this type can be linearly embedded in~$\mathbb{R}^3$. Let us choose one. Since $\pi = (0,0,0,1)$, the product $B \times I$ is obviously $\pi$-tight. Thus $B \times I$ is the $4$-ball we are looking for.
As for claim (2), consider any contractible $2$-complex $C$ that is geometrically realizable in $\mathbb{R}^3$, but has no free face. Possible choices for $C$ include Bing's house with two rooms, or the $8$-vertex triangulation of the Dunce Hat~\cite{BL-duncehat}. Contractibility and the lack of free faces are preserved under suspensions, and under subdivisions as well. So,
\begin{compactitem}[ --- ]
\item topologically,  (any subdivision of) the suspension of $C$ is contractible;
\item combinatorially, (any subdivision of) the suspension of $C$ is not collapsible;
\item geometrically, if we use the vertices $(0,0,0,1)$ and $(0,0,0,-1)$ as apices,  (any subdivision of) the suspension of~$C$ is $\pi$-tight in $\mathbb{R}^4$.
\end{compactitem}
\vskip-5mm
\end{proof}

\begin{rem}
Theorem~\ref{thm:tomo4ball}, part (i), would follow immediately from Goodrick's claim~\cite{Goodrick1} that the suspension of any non-collapsible ball is non-collapsible. Unfortunately, the proof in~\cite{Goodrick1} is erroneous, as pointed out by Rushing~\cite{Rush}. However, using short knots in $3$-balls as in the proof of Corollary~\ref{cor:NEnotConvex}, one can see that the suspensions of \emph{some} non-collapsible balls are non-collapsible.
\end{rem}

\begin{rem}
Recently~\cite{KarimBrunoZeeman}, we showed that for every PL $4$-ball $B$ there is an integer $m$ (depending on $B$) such that $\sd^m (B \times I)$ is collapsible. So, while the complex $C \times I$ of Theorem~\ref{thm:tomo4ball} has no collapsible subdivision, the ball $B \times I$ does.
\end{rem}

Of course, convex balls are $\pi$-tight with respect to \emph{all} directions $\pi$.
So Theorem \ref{thm:tomo4ball} leaves the door open to the possibility that convex $d$-balls are all collapsible.  This is in fact an open problem, already for $d = 4$.

\begin{conjecture}[{Lickorish, cf.~Kirby~\cite[Problem~5.5]{Kirby}}]
All linear subdivisions of the $d$-simplex are collapsible.
\end{conjecture}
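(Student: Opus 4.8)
The plan is to attack the natural dimension-raising strengthening of the conjecture, namely: \emph{every convex $d$-ball (equivalently, every linear triangulation of a convex $d$-polytope) is collapsible}, arguing by induction on $d$. The base case $d\le 3$ is Chillingworth's theorem (Corollary~\ref{cor:chillingworth}). For the inductive step, realize a convex $d$-ball $C$ linearly in $\R^d$, choose a vector $\pi$ in general position with respect to $C$, and let $v$ be the $\pi$-highest vertex of $C$. As in the proof of Lemma~\ref{lem:acyclic}, slicing $|C|$ by a hyperplane $h$ orthogonal to $\pi$ placed just below $v$ meets exactly the simplices containing $v$, and each simplex $v\ast\sigma$ (with $\sigma\in\Lk(v,C)$) is cut in a copy of $\sigma$; hence $h\cap|C|$ carries a linear triangulation combinatorially isomorphic to $\Lk(v,C)$. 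Since $h\cap|C|$ is the intersection of a convex body with a hyperplane, it is a convex $(d-1)$-polytope, so $\Lk(v,C)$ is a convex $(d-1)$-ball and is collapsible by the inductive hypothesis.

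Next I would lift this collapse of the link to a collapse of $C$ onto $C-v$. If $\Lk(v,C)\searrow\{w\}$ through elementary collapses $(\tau,T)$, then coning each step with $v$ yields an elementary collapse $(v\ast\tau,v\ast T)$ of $C$: every coface of $v\ast\tau$ in $C$ contains $v$, so these cofaces correspond bijectively to the cofaces of $\tau$ in the current state of $\Lk(v,C)$, and freeness is preserved. Since $\Lk(v,C)\subseteq C-v$, this coned sequence removes only faces containing $v$ and leaves $C-v$ untouched, collapsing $\St(v,C)$ down to the edge $v\ast w$; two final elementary collapses remove $v\ast w$ and then $v$. Thus $C\searrow C-v$, and the conjecture would follow if $C-v$ were known to be collapsible.

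This is exactly where the argument stalls, and it is the main obstacle. Unlike the planar situation in dimension three, $C-v$ is \emph{not} convex in general: deleting the open star of a vertex from a convex polytope leaves a convex region only when the neighbours of $v$ lie in convex position, which already fails for generic $4$-polytopes. Hence the inductive hypothesis does not apply to $C-v$ and there is no visible way to iterate. What one would really want is a class of linearly embedded $d$-balls that contains all convex ones, is closed under deletion of the $\pi$-highest vertex, \emph{and} whose members have collapsible vertex links. The class of $\pi$-tight balls is closed under max-vertex deletion (Lemma~\ref{lem:acyclic}(1)) and $C-v$ is $\pi$-tight and acyclic, but a hyperplane slice of a $\pi$-tight acyclic $4$-complex is only known to be an acyclic $3$-complex in $\R^3$; by Aumann's theorem (Theorem~\ref{thm:Aumann}) it would be convex only if it were tight in almost all directions, which $C-v$ typically is not, and otherwise it is not known to be collapsible. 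In dimension $3$ this difficulty evaporates because every such link is automatically planar and planar acyclic complexes are collapsible (Lemma~\ref{lem:planar}); no higher-dimensional substitute is available, and shellability does not help either since convex balls need not be shellable (Rudin's ball). Reproving Chillingworth's theorem in a form robust under a single vertex deletion is precisely the missing ingredient, which is why the conjecture remains open even for $d=4$; unconditionally one knows only that convex $d$-balls become collapsible, and even non-evasive, after $d-2$ barycentric subdivisions \cite{KarimBrunoMG&C}.
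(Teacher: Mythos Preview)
The statement you were given is a \emph{conjecture}, not a theorem: the paper presents it as Lickorish's open problem (cf.\ Kirby, Problem~5.5) and offers no proof. There is therefore nothing in the paper to compare your attempt against; the paper only records the weaker unconditional result that every star-shaped $d$-ball becomes collapsible after at most $d-2$ barycentric subdivisions (Theorem~\ref{1}).

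Your write-up is not a proof and does not claim to be one: you correctly identify that the natural induction on $d$ (reduce to the link of the $\pi$-highest vertex, which is a convex $(d-1)$-ball, then collapse $C$ onto $C-v$) breaks because $C-v$ need not be convex once $d\ge 4$, and the $\pi$-tight class, while closed under top-vertex deletion, does not guarantee collapsible links in dimension $\ge 4$. This diagnosis is accurate and is exactly the reason the paper's method for Theorem~\ref{thm:tomo3ball} does not extend; the paper says as much in the paragraph following Question~\ref{question:question}. Your observation that the missing ingredient is a class of linearly embedded $d$-balls that (i) contains convex balls, (ii) is closed under deletion of the $\pi$-top vertex, and (iii) has collapsible vertex links, is a fair summary of the obstruction. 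In short: there is no gap to flag in a proof, because neither you nor the paper provide one; your analysis of why the obvious approach fails is correct.
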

 
Goodrick extended the conjecture to ``all simplicial complexes with a star-shaped geometric realization, are collapsible'', cf.~\cite[Problem~5.5]{Kirby}. 
Recently, we proved a weaker version of Lickorish' and Goodrick's conjectures:

\begin{theorem}[{Adiprasito--Benedetti~\cite[Theorem~3.42]{KarimBrunoMG&C}}] \label{1}
Every star-shaped $d$-ball becomes collapsible after at most $d-2$ barycentric subdivisions. 
\end{theorem}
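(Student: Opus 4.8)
The plan is to induct on the dimension $d$, refining the vertex-deletion strategy used for Theorem~\ref{thm:tomo3ball} so that a single barycentric subdivision per dimension absorbs the failure of star-shapedness to descend to links. For $d\le 2$ nothing is needed: a $1$-ball is a path, and a $2$-ball embedded in the plane always has a free edge (maximize a generic linear functional over a realization), whose collapse leaves a smaller $2$-ball, so induction on the number of triangles gives collapsibility, and $d-2=0$. For $d=3$ the claim is only that one subdivision suffices, which is already weaker than Chillingworth's Theorem~\ref{thm:Chillingworth} in the convex case but must now be proved for \emph{all} star-shaped balls.

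For the inductive step, let $B\subset\R^3$ — more generally $B\subset\R^d$ — be star-shaped with respect to an interior point $p$, and fix a generic linear functional $\langle\pi,-\rangle$ with $p$ not minimizing it, so that the top vertex $v$ of $B$ lies on $\partial B$. As in Lemma~\ref{lem:acyclic}, cutting $B$ just below $v$ realizes $\Lk(v,B)$ as the slice $h\cap|B|$ and $B-v$ as $h^+\cap|B|$; one checks that $B-v$ is again star-shaped with respect to $p$ and has fewer vertices, so it is handled by induction on the vertex count once we know the subdivisions budgeted for $B$ also work for $B-v$. The deletion of $v$ becomes a legitimate collapsing move — first remove the free face $\Lk(v,\sigma)$ for a boundary facet $\sigma\ni v$, then collapse $\St(v,B)\searrow\Lk(v,B)$ by a Lemma~\ref{lem:planar2}-style deformation-retract argument one dimension down — \emph{provided} $\Lk(v,B)$ is collapsible. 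The single extra subdivision is exactly what is spent to guarantee this: after passing to $\sd B$, the link of the relevant top vertex (now a barycenter of a boundary face) carries a natural star-shaped realization as a $(d-1)$-ball, to which the inductive hypothesis applies, at a cost of $d-3$ further subdivisions, for the total $1+(d-3)=d-2$. One also verifies that barycentric subdivision commutes with taking links and with vertex deletion, so that the $d-2$ subdivisions of $B$ simultaneously supply, without leakage, the $d-3$ subdivisions required by every link that appears as the sweep proceeds from the top vertex of $\sd B$ down to the closed star of $\hat p$, which is a cone and hence collapsible.

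The main obstacle is precisely the local non-convexity of star-shaped (as opposed to convex) balls: a slice or link of a star-shaped region need not be star-shaped, and this is why for $d\ge 3$ no bound better than $d-2$ can be expected from this argument — each subdivision "convexifies" the complex one codimension higher, and the obstructions to running the radial collapse live in codimensions $2$ through $d-1$. Making rigorous the claim that one subdivision genuinely produces star-shaped realizations of the links, rather than merely topological $(d-1)$-balls, is the technical heart; I would do this by exploiting the cone structure of $B$ over $p$ to equip $\sd B$ with a piecewise-Euclidean metric that is convex along radial directions, and then read off the needed star-shaped realizations of links and slices from this metric, in the spirit of the metric-geometry methods underlying \cite{KarimBrunoMG&C}. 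The bookkeeping showing that the subdivision budget closes at exactly $d-2$ — no more, no fewer — is the delicate part, and it is what matches the sharpness of the stated bound.
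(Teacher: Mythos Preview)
First, note that the paper does \emph{not} prove this statement here: Theorem~\ref{1} is merely quoted from \cite[Theorem~3.42]{KarimBrunoMG&C}, so there is no in-paper proof to compare against. Your proposal must therefore stand or fall on its own.

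It does not stand. You correctly identify the obstacle --- links and slices of a star-shaped ball need not be star-shaped --- and then assert, without argument, that a single barycentric subdivision repairs this. That assertion \emph{is} the theorem, not a step towards it. Concretely: under a generic linear functional the top vertex of $\sd B$ is an original vertex $v$ of $B$ (barycenters of positive-dimensional faces are convex combinations and hence never extremal), so $\Lk(v,\sd B)=\sd\Lk(v,B)$, and you are back to needing $\Lk(v,B)$ to be a star-shaped $(d-1)$-ball. It is a $(d-1)$-ball, but it carries no natural embedding in $\R^{d-1}$ from the ambient picture, let alone a star-shaped one; your appeal to a ``piecewise-Euclidean metric convex along radial directions'' is a hope, not a construction. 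The actual proof in \cite{KarimBrunoMG&C} uses substantial input from metric geometry (CAT(0) structures on derived subdivisions) precisely to manufacture the required convexity properties of links, and this cannot be replaced by the hand-wave you give.

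There is also a bookkeeping error: barycentric subdivision does \emph{not} commute with vertex deletion in the way you need. One has $\sd(B-v)\subsetneq(\sd B)-v$, since the right-hand side still contains the barycenters of all faces through $v$ other than $v$ itself. So even if the link step worked, the sweep would not descend cleanly to an already-subdivided $B-v$, and your claim that ``the $d-2$ subdivisions of $B$ simultaneously supply, without leakage, the $d-3$ subdivisions required by every link'' is unsupported.
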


It is natural to ask whether the previous result can be extended to different topologies. 

\begin{question} \label{question:question}
Does every tight $d$-manifold become perfect after $d-2$ barycentric subdivisions? 
\end{question}

We know the answer is positive if the manifold is acyclic, by Theorem~\ref{1}.   
Question~\ref{question:question} admits positive answer also for homology spheres, by combining \cite[Theorem 3.42]{KarimBrunoMG&C} with \cite[Corollary~3.6]{Kuehnel}. Unfortunately, our proof of Theorem~\ref{1} does not extend to different topologies, and our proofs of Theorems~\ref{thm:tomo3ball} and \ref{thm:tomo3man} do not extend to higher dimensions. 

\medskip 
Finally, our knot-theoretical approach of Section~\ref{sec:knots} suggests the existence of a linear relation between the bridge index and the minimal number of critical faces in a discrete Morse function. For brevity, let us say that a $d$-complex ``has discrete Morse vector $(c_0, \ldots, c_d)$'' if it admits some discrete Morse function with exactly $c_i$ critical faces of dimension~$i$. For example, being collapsible is the same as having discrete Morse vector $(1, 0, \ldots, 0)$.

\begin{conjecture} \label{conj:knot1}
Any knot of bridge index $b$ appears: 
\begin{compactenum} [$\!\!\!\!\!$\rm(i)]
\item as knotted spanning edge in some $3$-ball with discrete Morse vector $(1,b-2,b-2,0)$,  if $b \ge 2$;
\item as $3$-edge subcomplex of some $3$-sphere with discrete Morse vector $(1,b-3,b-3,1)$, if $b \ge 3$.
\end{compactenum}
\end{conjecture}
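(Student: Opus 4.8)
The plan is to build, for each bridge index $b$, a discrete Morse function with the prescribed critical cells by iterating the one-trefoil gluing construction of Proposition~\ref{prop:LickorishClaim}, keeping careful track of the discrete Morse vector at each step. First I would establish the ``local'' building block: a single non-trivial knot of bridge index $2$ sits as a knotted spanning edge in a $3$-ball with discrete Morse vector $(1,1,1,0)$. This is not quite Proposition~\ref{prop:LickMar} (which only gives collapsibility after a detour, or rather gives a $\emph{collapsible}$ ball — vector $(1,0,0,0)$ — but for a $\emph{smaller}$ knot); rather, the point is that a generic bridge-$2$ knot forces exactly one extra critical $1$-cell and one extra critical $2$-cell, matching Goodrick's obstruction~\cite{GOO}. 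For the trivial knot ($b=2$ does not apply; $b=1$) the ball is the simplex boundary minus a facet, with vector $(1,0,0,0)$, consistent with ``$b-2=-1$'' only in the degenerate reading — so part (i) is really asserted for $b \ge 2$ and the base case is $b=2$.

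The inductive step mirrors the proof of Proposition~\ref{prop:LickorishClaim} exactly. Given a knot $K$ of bridge index $b$, write $K = K_1 \# K_2$ with $K_1$ of bridge index $b-1$ and $K_2$ of bridge index $2$; by induction $K_1$ lives as a knotted spanning edge in a $3$-ball $B_1$ with discrete Morse vector $(1,b-3,b-3,0)$, and $K_2$ lives in a $3$-ball $B_2$ with vector $(1,1,1,0)$. Running the pyramid construction of Lemma~\ref{lem:collapsepyramid} and the re-triangulation argument of Proposition~\ref{prop:LickorishClaim}, one concatenates the two spanning edges inside a $3$-ball $\tilde{S} - (\sigma \ast y_2)$ that collapses onto the wedge $B_2 \cup_{y_1} B_1$. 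Now the key accounting: a discrete Morse function on a wedge of two complexes can be taken to have discrete Morse vector equal to the sum of the two vectors minus $(1,0,0,0)$ (the two critical vertices are identified), so the wedge has vector $(1, (b-3)+1, (b-3)+1, 0) = (1, b-2, b-2, 0)$; and since the ambient $3$-ball collapses onto the wedge, it inherits the same discrete Morse vector. Capping off $\tilde{S}$ (rather than deleting a facet) gives the $3$-sphere statement of part (ii): a $3$-sphere obtained from a $3$-ball with vector $(1,b-2,b-2,0)$ by attaching a single tetrahedron has vector $(1,b-2,b-2,1)$ — wait, we want $(1,b-3,b-3,1)$, so here one must instead start the induction one level higher, using that $K$ of bridge index $b$ is $K_1 \# K_2$ with both $K_i$ of bridge index $\ge 2$ and $b_1 + b_2 = b+1$, and cap off the ball corresponding to the $(b-1,2)$-splitting $\emph{before}$ the last wedge, so that one of the two trefoil-type summands is ``absorbed'' into the $+1$ top cell. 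Concretely: realize $K$ as a $3$-edge subcomplex of $S_B = \partial(v \ast B)$ where $B$ is the $3$-ball with vector $(1,b-3,b-3,0)$ carrying $K$ as a knotted $\emph{spanning edge}$ (using part (i) for bridge index $b-1$... no, for bridge index $b$, $B$ has vector $(1,b-2,b-2,0)$); one checks as in Corollary~\ref{cor:lickmar} and Corollary~\ref{cor:bing} that removing one facet $v \ast \sigma$ from $S_B$ leaves a $3$-ball collapsing onto $B$, so $S_B$ itself has vector $(1, b-2, b-2, 1)$. To sharpen this to $(1,b-3,b-3,1)$ one exploits that the cone point $v$ together with the collapsing of $\partial B - \sigma$ onto a vertex cancels one of the critical $1$-$2$ pairs of $B$ against the new top cell. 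I would verify this last cancellation by writing down the explicit matching: the pair $(w \ast v, \text{top tetrahedron})$ together with $(v, w\ast v)$ reorganizes the one extra critical $2$-cell of $B$ coming from the ``closing off'' of its last trefoil into a non-critical cell.

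The main obstacle is this bookkeeping of critical cells through the wedge-and-cap operations — specifically, proving the cancellation claim that turns $(1,b-2,b-2,1)$ into $(1,b-3,b-3,1)$ in the sphere case. The difficulty is that discrete Morse vectors do not add naively under gluing unless the gluing locus is itself ``Morse-trivial'' (collapsible, and matched compatibly); the wedge at a single vertex is fine, but when we cone off a ball $B$ to form a sphere, the interaction between a perfect-up-to-knotting Morse function on $B$ and the obvious collapse of $\partial B - \sigma$ needs the two matchings to be made compatible along $\partial B$, which requires choosing the Morse function on $B$ so that its restriction to the boundary behaves well — a mild but genuine technical point. A secondary obstacle is the base case of part (i): one must exhibit an $\emph{explicit}$ bridge-$2$ knotted spanning edge in a $3$-ball with vector $(1,1,1,0)$ and $\emph{prove optimality}$ (no $(1,0,0,0)$), which is exactly Goodrick's theorem~\cite{GOO} for a single trefoil and is what pins the constant in the linear relation. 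Everything else is a faithful replay of the constructions in Section~\ref{sec:knots} with the discrete Morse vector carried along as extra data.
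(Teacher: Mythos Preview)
The statement you are attempting to prove is a \emph{Conjecture} in the paper; there is no proof to compare against. The paper establishes only the base cases --- part (i) for $b=2$ is exactly Proposition~\ref{prop:LickMar} (a bridge-$2$ knot as spanning edge in a \emph{collapsible} ball, vector $(1,0,0,0)=(1,b-2,b-2,0)$), and part (ii) for $b=3$ and \emph{composite} knots is Proposition~\ref{prop:LickorishClaim} --- and explicitly leaves the rest open.

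Your inductive step contains the fatal gap that explains why this is still a conjecture: you write ``$K = K_1 \# K_2$ with $K_1$ of bridge index $b-1$ and $K_2$ of bridge index $2$'', but such a decomposition exists only when $K$ is composite. Prime knots of every bridge index $b\ge 2$ exist (the $(p,q)$-torus knot has bridge index $\min(p,q)$), and for these your induction cannot even begin. This is precisely why the paper restricts Proposition~\ref{prop:LickorishClaim} to composite knots, and why the remark following the conjecture says that (i) up to $b_{\max}$ would yield (ii) only for \emph{composite} knots up to $b_{\max}+1$. Once your bookkeeping is cleaned up, your argument is nothing more than this composite-case reduction; it does not address the open content of the conjecture.

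Two smaller corrections. First, your ``local building block'' with vector $(1,1,1,0)$ is a misreading: the $b=2$ base case of (i) is the collapsible ball $(1,0,0,0)$ from Proposition~\ref{prop:LickMar}, not $(1,1,1,0)$. Second, the drop from $(1,b-2,b-2,1)$ to $(1,b-3,b-3,1)$ that puzzled you is not a mysterious cancellation. In the sphere construction of Proposition~\ref{prop:LickorishClaim} one collapses onto a \emph{wedge} $B_1 \cup_{y_1} B_2$ of two balls carrying knots of bridge indices $b_1,b_2$ with $b_1+b_2-1=b$; the wedge has vector $(1,(b_1-2)+(b_2-2),(b_1-2)+(b_2-2),0)=(1,b-3,b-3,0)$, and adding back the removed facet contributes the single critical $3$-cell. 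Your difficulty arose because you coned over a single ball instead of wedging two.
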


Conjecture~\ref{conj:knot1}-(i) holds true for $b=2$ (Proposition~\ref{prop:LickMar}).
Conjecture~\ref{conj:knot1}-(ii) holds true for $b=3$, at least if the knot is composite (Proposition~\ref{prop:LickorishClaim}). More generally, if one could prove item (i) for all knots of bridge index $b \le b_{\max}$, then one could use the ideas of Proposition~\ref{prop:LickorishClaim} to prove item (ii) for all \emph{composite} knots of bridge index $b \le b_{\max} + 1$.

Conjecture~\ref{conj:knot1} is ``best possible''. In fact, if $b\ge 2$ and $T_{b-1}$ is a connected sum of $b-1$ trefoil knots, then the bridge index of $T_{b-1}$ is $b$; in this case,
\begin{compactenum} [(i)]
\item  for $b=3$, Goodrick~\cite{GOO} showed that $T_{b-1}$ cannot appear as knotted spanning edge in any $3$-ball with discrete Morse vector $(1,b-3,b-3,0)$.
\item for any $b \ge 4$, the second author~\cite{Benedetti-DMT4MWB} showed that  $T_{b-1}$ cannot appear as $3$-edge subcomplex in any sphere with discrete Morse vector $(1,b-4,b-4,1)$ or smaller. 
\end{compactenum}

\section*{Acknowledgments}
Thanks to G\"{u}nter Ziegler, Frank Lutz and Igor Pak and for useful suggestions and references.

\begin{small}

\end{small}
\end{document}